\theoremstyle{plain}
\newtheorem{thm}{Theorem}[section] 
\newtheorem{prop}[thm]{Proposition} 
\newtheorem{lem}[thm]{Lemma} 
\newtheorem{cor}[thm]{Corollary} 
\theoremstyle{remark} 
\newtheorem*{rem}{Remark}
\theoremstyle{definition}
\numberwithin{equation}{section}
\renewcommand*{\div}{\operatorname{div}}
\newcommand*{\id}{\operatorname{Id}}
\newcommand*{\bydef}{\overset{\rm def}{=}}
\newcommand*{\norm}[1]{\left\Vert #1\right\Vert}
\begin{document}

\title[Ill-posedness of Euler--Maxwell]{Ill-Posedness of the Incompressible Euler--Maxwell Equations in the Yudovich Class}

\author{Haroune Houamed}
\address{New York University Abu Dhabi \\
Abu Dhabi \\
United Arab Emirates} 
\email{\href{mailto:haroune.houamed@nyu.edu}{haroune.houamed@nyu.edu}}

\thanks{\emph{Acknowledgements.} The author extends his sincere thanks to Diogo Ars\'enio for many fruitful discussions on plasma models.}

\keywords{perfect incompressible two-dimensional fluids, Maxwell's system, plasmas, Yudovich  theory, ill-posedness}
\date{\today}

\begin{abstract}
It was shown   recently by Ars\'enio and the author that the two-dimensional incompressible Euler--Maxwell system is globally well-posed in the Yudovich class, provided that the electromagnetic field enjoys appropriate conditions, including the  {\em Normal Structure}. In this paper, we prove that this assumption is sharp, in the sense that the Euler--Maxwell system becomes ill-posed in the Yudovich class for initial data that do not obey the   {\em Normal Structure} condition. The proof applies to both the whole plane and the two-dimensional torus, and holds for any value of the speed of light $c\in (0,\infty)$. This is achieved by expanding the magnetic field around a horizontal background and showing that the Lorentz force can be decomposed into two parts: the first is in the form of a singular operator acting on the vorticity, and the second, a ``remainder'', is of lower order when analyzed in a specific time regime.
\end{abstract}

\maketitle

\tableofcontents


\section{Introduction}

\subsection{The model of interest}

We consider the incompressible Euler--Maxwell system
\begin{equation}\label{EM}\tag{EM}
	\begin{cases}
		\begin{aligned}
			\text{\tiny(Euler's equation)}&&&\partial_t u +u \cdot\nabla u = - \nabla p + j \times B, \qquad &\div u =0,&
			\\
			\text{\tiny(Amp\`ere's equation)}&&&\frac{1}{c} \partial_t E - \nabla \times B =- j , 
			\\
			\text{\tiny(Faraday's equation)}&&&\frac{1}{c} \partial_t B + \nabla \times E  = 0 , &\div B = 0,&
			\\
			\text{\tiny(Ohm's law)}&&&j= \sigma \big( cE +  u \times B\big) , &
		\end{aligned}
	\end{cases}
\end{equation}
 set in the whole two-dimensional plane or torus, i.e., $ t\in (0,\infty )$, and $x\in  \mathbb{R}^2$ or $x\in  \mathbb{R}^2/2\pi \mathbb Z$, supplemented with the initial data $(u,E,B)|_{t=0}=(u_0,E_0,B_0)$.  In this setting, the principle  unknowns $u$, $E$ and $B$ correspond to the velocity of the conducting fluid and its  electromagnetic field, respectively. The second type of unknowns, i.e., the scalar pressure $p$ and the current density $j$, can be recovered via solving an elliptic equation, and by virtue of Ohm's law, respectively. Finally, the non-negative constants $c,\sigma \in (0,\infty)$ refer to the speed of light and the electrical conductivity parameter, respectively. 
 
 We refer to \cite{bis-book, D-book} for more detailed discussions about the physical background of this system of equations, and to \cite{as} for the derivation of various plasma models from the Vlasov--Maxwell--Boltzmann system.  We further refer to \cite{a19, ag20, AHB24, aim15, gim, IK2011, MN, RW24, FW22} for more advances on the analysis of the viscous version of these equations corresponding to the Navier--Stokes--Maxwell system.
 
\subsection{Motivation and Main result} As it is apparent in its formulation above, the Euler--Maxwell model arises from the nonlinear coupling of the Euler equations with the full set of Maxwell equations governing electromagnetism. Here, we are interested in the question of   extending  Yudovich  theory \cite{Y63} to \eqref{EM}. More specifically, we address the following question:
 
{\em \textbf{Question.}   Can the contribution of the Lorentz force $j\times B$ in \eqref{EM} be treated as a perturbation, thus allowing Yudovich theory to hold if the electromagnetic field is ``sufficiently well-behaved''}

 This question was positively answered in \cite{ah}, provided the following three conditions on the initial data are satisfied:
 
 \begin{enumerate}
 	\item { \em Normal Structure.}  The electromagnetic field acts in ``good'' directions,  meaning that  	\begin{equation*} 
	E(0,x)=
	\begin{pmatrix}
		E_1(0,x)\\E_2(0,x)\\0
	\end{pmatrix}
	\qquad\text{and}\qquad
	B(0,x)=
	\begin{pmatrix}
		0\\0\\b(0,x)
	\end{pmatrix}.
\end{equation*}
 	This structure is   preserved by the system \eqref{EM} for all positive times.
 	\item {\em Regularity.} The electromagnetic field is initially regular enough, in the sense that 
 	\begin{equation*}
 		(E_0,B_0) \in B^\frac{7}{4} _{2,1}(\mathbb R^2).
 	\end{equation*}
 	\item { \em Non-relativistic regime.} The speed of light is sufficiently large compared to the size of the initial data (including the fluid velocity) in certain suitable functional spaces, i.e.,
 	\begin{equation*}
 		 c_0 <c
 	\end{equation*}
 	where $c_0$ depends in a nonlinear way on the size of the initial data.
 \end{enumerate}
 
 A very natural question that arises is whether the preceding three conditions are optimal and necessary for extending Yudovich's theory to the Euler--Maxwell system   \eqref{EM}. In this paper, we prove that  the {\em Normal Structure} is indeed sharp. More specifically, we show that \eqref{EM} is {\em mildly ill-posed} in the Yudovich  class if the {\em Normal Structure} is not satisfied. This is the content of following statement, which represents  the main result of the paper.
 
 \begin{thm}\label{main:thm}
 	Let $\sigma,c\in (0,\infty)$, and consider \eqref{EM} on $\mathbb R^2$ or $\mathbb T^2$. There is a universal constant $c_*>0$, depending only on $c$ and $\sigma$, such that, for any $N>0$, there is   a smooth initial datum $(u_{0,N}, E_{0,N},B_{0,N})$ satisfying
 	\begin{equation*}
 		B_{0,N} \cdot \vec e_1 \neq 0  \quad \text{and/or} \quad B_{0,N} \cdot \vec e_2 \neq 0,
 	\end{equation*}
 	with 
 	\begin{equation*}
 		\norm {\omega_{0,N}}_{L^\infty} =\mathcal O \left(\frac{1}{N}\right),
 	\end{equation*}
 	and generating a unique local-in-time smooth solution $ (u_N,E_N,B_N)$ of \eqref{EM} such that 
 	\begin{equation*}
 		\norm {\omega_N (t)}_{L^\infty} \geq c_*,
 	\end{equation*}
 	for some positive time $t \lesssim  1/N$.
 \end{thm}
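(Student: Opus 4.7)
I would prove Theorem~\ref{main:thm} by constructing explicit initial data with a horizontal constant background $B_{0,N}\equiv\vec e_1$ (which maximally violates the \emph{Normal Structure}), vanishing initial velocity $u_{0,N}\equiv 0$, and a single-mode vertical electric field $E_{0,N}=(0,0,\chi(x)\sin(Nx_1))$, with $\chi\equiv 1$ on $\mathbb T^2$ or a fixed smooth bump on $\mathbb R^2$. Local-in-time existence of a smooth solution $(u_N,E_N,B_N)$ follows from standard high-regularity energy estimates (independent of Yudovich theory), so the task is to quantify the growth of $\|\omega_N\|_{L^\infty}$ on a short window $[0,t_\ast]$ with $t_\ast \sim 1/(cN)$.

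The decomposition alluded to in the abstract is obtained by writing $B=\vec e_1+\widetilde B$ and applying Ohm's law, which expresses the $2$D scalar curl of the Lorentz force as
\begin{equation*}
\operatorname{curl}_{\mathrm{2D}}(j\times B) \;=\; \sigma c\,\partial_1 E_3 \;-\; \sigma R_1^2 \omega \;+\; \mathcal R[\widetilde B,u,E],
\end{equation*}
where $R_1$ is the first Riesz transform, so that $-\sigma R_1^2\omega=\sigma\partial_1^2(-\Delta)^{-1}\omega$ is precisely the zero-order Calder\'on--Zygmund operator acting on the vorticity that the abstract singles out. Crucially, the $x_1$-only structure of the initial data is propagated by the full system: a direct check of each equation shows that $u_1$, $u_3$, $E_1$, $E_2$, $B_3$ and $\widetilde B_1$ remain identically zero for all $t$ (the component $u_1$ being killed by the Leray projection of an $x_1$-dependent Lorentz force on $\mathbb T^2$). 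In this reduced class the transport $u\cdot\nabla\omega$ and the remainder $\mathcal R$ vanish identically, and the dynamics collapse to
\begin{equation*}
\partial_t\omega + \sigma\omega = \sigma c\,\partial_1 E_3, \qquad \tfrac{1}{c^2}\partial_t^2 E_3 - \partial_1^2 E_3 + \sigma \partial_t E_3 = \tfrac{\sigma}{c}\partial_t u_2,
\end{equation*}
the second equation being obtained by eliminating $\widetilde B_2$ from Amp\`ere--Faraday. A direct Fourier/ODE analysis at wavenumber $N$ then yields $E_3(t,x)=e^{-\sigma c^2 t/2}\cos(cNt)\sin(Nx_1)+\mathcal O(1/N)$ on $[0,1/N]$ (the source $\tfrac{\sigma}{c}\partial_t u_2$ is $\mathcal O(1/N^2)$-small relative to the main terms, since $u_2(0)=0$ and $u_2(t)=\mathcal O(1/N)$ throughout). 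Inserting this into the $\omega$-ODE and choosing $t_\ast=\pi/(2cN)$ gives
\begin{equation*}
\omega(t_\ast,x) = \sigma\cos(Nx_1)+\mathcal O(1/N),
\end{equation*}
so $\|\omega_N(t_\ast)\|_{L^\infty}\geq \sigma/2 =: c_\ast$ for $N$ large, which proves the statement on $\mathbb T^2$ with $\|\omega_{0,N}\|_{L^\infty}=0$ and $t_\ast\lesssim 1/N$.

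The principal obstacle lies in adapting the argument to $\mathbb R^2$, where the pure $x_1$-ansatz has infinite energy and must be truncated by a fixed smooth bump $\chi$. The cut-off breaks the exact one-dimensional reduction by reintroducing commutator terms $[\chi,\partial_1]$ in the remainder $\mathcal R$, as well as making the Leray projection nontrivial; however, each such commutator gains a factor $1/N$ relative to the oscillatory main part (since the frequency of $\chi$ stays bounded while that of the ansatz is $N$), so a bootstrap in a weighted high-Sobolev norm of the form $\|u\|_{H^s}+N^{-1}\|(E,B)\|_{H^{s+1}}$ for some $s>2$ should close on $[0,t_\ast]$, the Gronwall exponent being harmless since $t_\ast\sim 1/N$. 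Verifying this uniformity in $N$, in tandem with the precise control of the damped wave dynamics of $E_3$ at the frequency $N$ so as not to spoil the cosine ansatz, is the technical heart of the argument.
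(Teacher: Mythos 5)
Your construction does not prove the theorem in the sense the paper intends, and the mechanism you exploit is genuinely different from (and weaker than) the one the paper uses. The growth of $\|\omega_N\|_{L^\infty}$ in your ansatz is driven entirely by the term $\sigma c\,\partial_1 E_3$ with $E_{0,N}=(0,0,\sin(Nx_1))$: this is an $\mathcal O(N)$ external forcing coming from electromagnetic data that is \emph{large} in every norm where the local theory lives ($\|E_{0,N}\|_{B^{2}_{2,1}}\sim N^2$, $\|\nabla E_{0,N}\|_{L^\infty}\sim N$). Two consequences. First, your data do not converge to the background state $(0,0,\vec e_1)$ in any topology controlling the solution map, so no discontinuity of that map is exhibited; the theorem is explicitly a statement about mild ill-posedness, i.e.\ discontinuity of the solution map in the Yudovich class. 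Second, the same mechanism (an $O(1)$-amplitude, frequency-$N$ electromagnetic field forcing the vorticity through a derivative) produces $O(1)$ vorticity in time $1/N$ even when the data \emph{do} satisfy the Normal Structure; so your argument cannot distinguish the two structures and hence cannot establish the sharpness of the Normal Structure condition, which is the entire content of the result. Relatedly, in your one-dimensional reduction $\mathcal R=\partial_{x_1}^2(-\Delta)^{-1}$ acts as $-\mathrm{Id}$ on $x_1$-only functions, so the genuinely two-dimensional unboundedness of the Riesz operator on $L^\infty$ — the actual source of ill-posedness here — plays no role in your proof. There is also a quantitative gap you gloss over even on $\mathbb T^2$: the local existence time furnished by the critical well-posedness theory is $\sim \|(u_0,E_0,b_0)\|_{B^2_{2,1}}^{-1}\sim N^{-2}$, which is far shorter than the window $t_*\sim 1/(cN)$ on which you need the solution; justifying existence up to $t_*$ requires exploiting the (approximate) $1$D structure, and on $\mathbb R^2$ you acknowledge the whole argument is open.

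For comparison, the paper keeps the electromagnetic perturbation uniformly small: the data are $(\omega_{0,N},E_{0,N},B_{0,N})=\varepsilon(f_N,\,g\vec e_3,\,\vec e_1+\nabla^\perp g)$ with $g$ a \emph{fixed} profile and $f_N$ chosen (Lemma \ref{lemma:data}) so that $\|f_N\|_{L^\infty}=\mathcal O(1)$ while $\|\mathcal R f_N\|_{L^\infty}\gtrsim N$ and $\|f_N\|_{B^{2/p}_{p,1}}\lesssim N$. The vorticity growth then comes from $t\,\|\mathcal R\omega_0\|_{L^\infty}\sim \varepsilon t N$ in the expansion of $e^{t\mathcal R}\omega_0$, i.e.\ from the failure of $\mathcal R$ to be bounded on $L^\infty$, while the transport is handled by the Elgindi--Masmoudi commutator estimate (Lemma \ref{lemma:commutator:Riesz}) and Vishik's lemma, and the remaining Lorentz-force terms are shown to be $o(1)$ in the regime $\varepsilon tN\sim 1$ via Proposition \ref{prop:source:term}. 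If you want to salvage your idea, you would at minimum have to rescale $E_{0,N}$ so that it is small in $B^2_{2,1}$ uniformly in $N$ — but then $\partial_1 E_3$ is no longer large and your growth mechanism disappears, which is precisely why the paper routes the ill-posedness through $\mathcal R\omega_0$ rather than through the electromagnetic forcing.
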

 
 Theorem \ref{main:thm} establishes that \eqref{EM} is mildly ill-posed in the Yudovich  class for any finite value of the speed of light $c\in (0,\infty)$. The formal singular limit  $c=\infty$ corresponds to the MHD model (see \eqref{MHD}, below), which has been similarly shown to be mildly ill-posed in  \cite{WZ23}.  On the other hand, the (non-physical) case $c=0$  corresponds to the Euler equation with a forcing term given by a Riesz-type operator in the vorticity formulation, a model that is also ill-posed (this will be discussed further later on). To summarize, our Theorem \ref{main:thm} demonstrates that the main source of ill-posedness across all these configurations is the ``bad'' direction of the electromagnetic field, which disrupts Yudovich's theory in this context. This concludes the discussion on the mild ill-posedness (or, equivalently, the discontinuity of the solution map) for all these fluid-plasma models in the Yudovich class. 
 
 We conclude by pointing out that it would be particularly interesting to strengthen the mild ill-posedness to strong ill-posedness for both the Euler--Maxwell system  \eqref{EM} and the MHD model. However, achieving this requires more refined analysis, which lies beyond the scope of this paper. Indeed, it is not straightforward to extend the recent results of Elgindi and Shikh Khalil  \cite{EK25}, who established strong ill-posedness for the Euler--Riesz system in the Yudovich class, to the more complex systems \eqref{EM} and MHD. This remains a challenging open problem, which we hope to address in future work.

\subsection{Heuristic discussion}

The main  finding of this paper establishes that \eqref{EM} is mildly ill-posed in the Yudovich  class. Specifically, we demonstrate that the ill-posedness arises when solving the equations around a steady magnetic background. Broadly speaking, the source of ill-posedness in this case --- solving the equations near a particular steady state --- shares some similarities with the approach used in \cite{WZ23} for the Magneto-Hydrodynamic (MHD) model:
\begin{equation}\label{MHD} \tag{MHD}
	\begin{cases}
		\begin{aligned}
			\partial_t u + u\cdot \nabla u + \nabla p= (\nabla \times  B)\times B, \qquad \div u &=0
		\\
		\partial_t B -\Delta B = \nabla \times (u\times B), \qquad \div B &=0.
		\end{aligned}
	\end{cases}
\end{equation}
However,   the core of our analysis is fundamentally different due to the parabolic-hyperbolic nature of Maxwell’s equations and the more intricate structure of the Lorentz force in the velocity equation.

For clarity, we note that    \eqref{MHD}  can be derived from the Euler--Maxwell system \eqref{EM} by taking the non-relativistic limit $c\to \infty$. This formal procedure can, in fact, be rigorously justified in various contexts, see \cite{ahh24, ah2, JKK25} for more details on this matter. 

Finally, we refer the interested reader to \cite{JS22, JS24} for related ill-posedness results for another version of plasma models involving the Hall-term.

In the case of the Euler--Maxwell system \eqref{EM}, the source of ill-posedness can be more clearly observed (in simpler terms) by considering the (non-physical) situation where $c\to 0$. Formally,  as $c\to 0$, the Euler--Maxwell system \eqref{EM} reduces to the following perturbed Euler equations when recast in vorticity form:
\begin{equation}\label{Toy:model}\tag{Toy Model}
	\partial_t \bar \omega + \bar  u \cdot \nabla \bar  \omega =  \nabla \times \Big ( (\bar  u\times B_0 )\times B_0 \Big ), \qquad \bar u =  \nabla^\perp \Delta^{-1} \bar \omega,
\end{equation} 
where we formally define 
\begin{equation*}
	(\bar u , \bar  \omega) \bydef  \lim_{c\to 0} (u,\omega).
\end{equation*}
Thus, by choosing an initial magnetic field of the form
\begin{equation*}
	B_0= (\alpha,\beta,0) ,
\end{equation*}
for any fixed $(\alpha,\beta) \in \mathbb R^2\setminus \{ (0,0)\}$, we obtain the following configuration (assuming  without loss of generality   that $\alpha\neq 0$)  
\begin{equation}\label{Euler-Riesz} \tag{ER}
	\partial_t \bar\omega + \bar u \cdot \nabla \bar\omega = \beta^2  \bar\omega +  (\alpha^2+\beta^2) \mathcal R \bar\omega, 
\end{equation}
where we set 
\begin{equation}\label{Riesz:def}
	\mathcal R  \bydef \partial_{x_1}^2  (-\Delta )^{-1}.
\end{equation}

When $\beta=0$, Elgindi and Masmoudi \cite{EM20} established first that the Euler--Riesz model \eqref{Euler-Riesz}, for $\beta=0$, is mildly ill-posed in the Yudovich class. The same proof would work with $\beta\neq 0$ as well. The key observation in their proof is that Riesz operators are not continuous on $L^\infty$, which initially suggests   that the source term in \eqref{Euler-Riesz} could prevent the propagation of the initial $L^\infty$ norm of the vorticity. This observation was rigorously justified by employing two essential ingredients: First, if one formally drops the transport term in \eqref{Euler-Riesz}, expanding the solution around $t=0$ leads directly to (strong) ill-posedness due to the continuity defect of the Riesz operator $\mathcal R$ in  $L^\infty$. The second ingredient  involves rewriting the solution along the characteristic, thereby removing the transport term from the equations. However, this requires dealing with the commutator between Riesz's operator and the velocity field's flow. To close this argument, a crucial commutator estimate was   obtained in \cite{EM20}  (see Lemma \ref{lemma:commutator:Riesz}, below), which will   be a central element  in our analysis here, too.

More recently, Elgindi and Shikh Khalil \cite{EK25} made progress on the model \eqref{Euler-Riesz} (with $\beta=0$), establishing a strong ill-posedness result in the sense of norm inflation. Their approach is based on deriving a nonlinear leading order system for the Euler equations with Riesz forcing, which turns out to be strongly ill-posed in $L^\infty$. The analysis laid out in  \cite{EK25} also suggests that the transport term counteracts the linear growth, causing the full nonlinear equation to exhibit growth of a lower order of magnitude than the linear one.   Once again, extending this mechanism  to \eqref{EM} and \eqref{MHD} would be of a great interest. However, the validity of the techniques from \cite{EK25} for more complex perturbations of Euler equations, such as Euler--Maxwell and MHD, requires more careful analysis that goes beyond the scope of this paper. 

\subsection{Roadmap to ill-posedness   in the Yudovich  class}

 Given the preceding discussion, our primary goal here is to prove that the ill-posedness of the formal system \eqref{Toy:model}, derived from \eqref{EM} in the limit $c\to 0$, persists for small values of $c$. In fact, we will show that the ill-posedness remains valid for any finite value of the speed of light $c\in (0,\infty)$.   
To that end, we seek a solution to \eqref{EM} where the electromagnetic field is given by 
\begin{equation*}
	B(t,x)= (\alpha,\beta,0) +  \big (b(t,x), 0\big ), \qquad   b=(b_1,b_2) \qquad  \text{ and } \quad
	E=\big (0,0,E(t,x)\big ) ,
\end{equation*}
for some $(\alpha,\beta) \in \mathbb R^2\setminus \{ (0,0)\}$ and a scalar unknown functions $b_1,b_2$ and $E$.

 For simplicity, we restrict our analysis here to the case $\alpha=1$ and $\beta=0$. The   general case of non-trivial values of $\beta$ can be treated in exactly the same way. In this case, the equations satisfied by $(u,E,b)$ read as follows (henceforth, we also set $\sigma=1$, for simplicity)
\begin{equation} \label{EM:perturbation:velocity}
	\begin{cases}
		\begin{aligned}
			   &\partial_t u +u \cdot\nabla u = - \nabla p + ( j_b- u_2)  b^\perp    + ( j_b- u_2)\vec{e}_2, &\quad \div u =0,&
			\\
			&\frac{1}{c} \partial_t E - \nabla ^\perp \cdot  b =- j_b + u_2, 
			\\
			 &\frac{1}{c} \partial_t b -\nabla ^\perp  E  = 0 , &\div b = 0,&
			\\
			 &j_b=  cE + u^\perp \cdot b   ,
		\end{aligned}
	\end{cases}
\end{equation}
where $\vec e_j$ denotes the $j^{\text{th}}$ component of canonical orthonormal basis of $\mathbb R^3$, and $\nabla^ \perp = (-\partial_{x_2}, \partial_{x_1}) $.

Therefore, rewriting the preceding  system in the vorticity formulation yields that 
\begin{equation}\label{EM'}
	\begin{cases}
		\begin{aligned}
			   &\partial_t \omega +u \cdot\nabla \omega =    b  \cdot \nabla ( j_b- u_2)     +  \partial_{x_1} j_b+\mathcal R \omega , &\quad \div u =0,&
			\\
			&\frac{1}{c} \partial_t E - \nabla ^\perp \cdot  b +cE  =- u^\perp \cdot b + u_2, 
			\\
			 &\frac{1}{c} \partial_t b -\nabla ^\perp  E  = 0 , &\div b = 0,&
			\\
			 &j_b=  cE + u^\perp \cdot b   ,
		\end{aligned}
	\end{cases}
\end{equation}
where $\mathcal R$ is introduced in \eqref{Riesz:def}.

Thus, the roadmap for establishing the  ill-posedness of \eqref{EM} can be summarized  in following three steps:
\begin{enumerate}
	\item {\em   Local-in-time wellposedness.} In Section \ref{section:existence}, we show  that any smooth initial datum $(u_0,E_0,b_0)$, belonging to the critical space $B^{2}_{2,1}(\mathbb R^2)$, generates  a unique local-in-time solution $(u,E,b)$ of \eqref{EM:perturbation:velocity}, maintaining the same initial regularity over the time of existence.
	\item { \em Low-vs-high order components of Lorentz force.} In Section \ref{section:Lorentz:force}, we analyze the Lorentz force. Specifically, we show that the first and second components of the body force  
	\begin{equation*}
		 b  \cdot \nabla ( j_b- u_2)     +  \partial_{x_1} j_b
	\end{equation*}
	are small in $L^1_t B^{1}_{2,1}$ within a certain time frame, while  the third component $\mathcal R \omega$ remains  of order one in the $L^1_tL^\infty$ space.
	 \item {\em Discontinuity of the solution map in $L^\infty$.} 
Following the approach of \cite{EM20}, in this final step we construct a family of initial data indexed by $N\gg1$, where the initial vorticities $\omega_{0,N}$ are small in $L^\infty$ (roughly of size $1/N$), while  both  $ \mathcal R \omega_{0,N}$  and $\omega_{0,N} $ are of order $N$  in $L^\infty$ and $B^0_{\infty,1}$, respectively. We then show that the vorticity remains of order one at some positive time $t\ll1 $. This result stems from the analysis of the previous step, which allows us to treat the vorticity equation from \eqref{EM'} as a perturbation of \eqref{Euler-Riesz} within a specific time frame. 
\end{enumerate}

Notably, in contrast of the approach in \cite{ah} for establishing  global well-posedness of \eqref{EM} in the Yudovich  class under the {\em Normal Structure}, we do not need to use the dispersive properties of Maxwell's equations, that is Strichartz estimates.  All our estimates here are based on elementary energy-type arguments. This allows the findings of the paper to also hold on the two-dimensional torus without requiring substantial changes to the approach.

{\em Organization of the paper.} We embark with a ``black box'', collecting all the necessary tools that we need in our analysis. Thereafter, Sections \ref{section:existence}, \ref{section:Lorentz:force} and \ref{section:ill-posedness} are devoted to the details of each of the three bullet points from the roadmap of the proof discussed above.

\section{Tool box}\label{section:tool:box}

In this section, we recall some important preliminary results that will be essential for the analysis in the following sections. These results are applicable in any dimension $d\geq 2$, and will serve as building blocks for the subsequent proofs.

\subsection{Functional spaces and basic properties} To maintain clarity and simplicity in the presentation, we will omit the detailed discussion of the definitions and basic properties of the functional spaces used in this work, such as Besov and Sobolev spaces. Instead, we refer the reader to the comprehensive statements in \cite{bcd11} for the complete background on the functional analysis tools employed in this paper. Additionally, for a more concise overview of Besov spaces, we refer to the appendix of \cite{ah}, where the same notations will be adopted in the present work. This will allow us to focus on the main results and avoid redundancy in the exposition.

For the notations, the non-homogeneous Besov spaces will be denoted by 
\begin{equation*}
	B^s_{p,q}(\mathbb R^d),
\end{equation*}
for $s\in \mathbb R$, and $p,q\in [1,\infty]$. Sobolev spaces, corresponding to the case $p=q=2$ in the above, are denoted by 
$$H^s(\mathbb R^d).$$

The approach we will follow in the subsequent sections does not require performing advanced para-differential computations. Instead, the only product law we rely on to estimate the nonlinear terms in the perturbed Euler--Maxwell system is captured by the following lemma. 
\begin{lem}[Product-laws]\label{lemma:product}
	For any smooth functions $f$ and $g$, it holds that 
	\begin{equation*}
		\norm {fg}_{B^s_{p,q}(\mathbb R^d)} \lesssim \norm {f}_{L^\infty(\mathbb R^d)} \norm {g}_{B^s_{p,q} (\mathbb R^d)}+ \norm {g}_{L^\infty (\mathbb R^d)} \norm {f}_{B^s_{p,q}(\mathbb R^d)},
	\end{equation*} 
	for any $s>0$ and $p,q\in [1,\infty]$.
\end{lem}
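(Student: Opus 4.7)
The plan is to invoke Bony's paraproduct decomposition
\begin{equation*}
fg = T_f g + T_g f + R(f,g),
\end{equation*}
where $T_f g = \sum_{j} S_{j-1} f \, \Delta_j g$ is the paraproduct and $R(f,g) = \sum_{|j-k| \leq 1} \Delta_j f \, \Delta_k g$ is the remainder. I would estimate each of the three pieces separately, noting in advance that the two paraproducts handle low-high interactions and require no restriction on $s$, while the remainder collects the high-high interactions and is where the hypothesis $s>0$ will enter.

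For the paraproduct $T_f g$, I would first exploit the fact that the Fourier support of each summand $S_{j-1} f \, \Delta_j g$ sits in a dyadic annulus of size $\sim 2^j$, so that only finitely many values of $j$ (those with $|j-q|\leq N_0$ for some universal $N_0$) contribute to $\Delta_q (T_f g)$. The pointwise bound $\|S_{q-1}f\|_{L^\infty} \lesssim \|f\|_{L^\infty}$ then yields $\|\Delta_q (T_f g)\|_{L^p} \lesssim \|f\|_{L^\infty} \|\Delta_q g\|_{L^p}$. Multiplying by $2^{qs}$ and taking the $\ell^q$-norm in $q$ gives $\|T_f g\|_{B^s_{p,q}} \lesssim \|f\|_{L^\infty} \|g\|_{B^s_{p,q}}$, and a symmetric argument handles $T_g f$; in both steps no sign constraint on $s$ intervenes.

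The hard part will be handling the remainder $R(f,g)$. Since the Fourier support of $\Delta_j f \, \widetilde \Delta_j g$ lies in a ball of radius $\sim 2^j$ rather than an annulus, only indices with $j \geq q - N_0$ contribute to $\Delta_q R(f,g)$, and the cancellation that saved us for the paraproducts is no longer available. Placing $L^\infty$ on one factor and $L^p$ on the other, then multiplying by $2^{qs}$, I expect an estimate of the form
\begin{equation*}
2^{qs} \|\Delta_q R(f,g)\|_{L^p} \lesssim \sum_{j \geq q - N_0} 2^{(q-j)s} \Big( 2^{js} \|\Delta_j f\|_{L^p} \|g\|_{L^\infty} + \|f\|_{L^\infty} \, 2^{js} \|\Delta_j g\|_{L^p}\Big),
\end{equation*}
which is a discrete convolution in $q$ against the kernel $2^{ks} \mathbf{1}_{k \leq N_0}$. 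For $s > 0$ this kernel lies in $\ell^1(\mathbb{Z})$ (the geometric series converges as $k \to -\infty$), so discrete Young's inequality closes the $\ell^q$-summation in $q$ and produces the advertised bound. This summability is precisely what breaks at $s = 0$, which is exactly why the hypothesis $s>0$ appears in the statement.
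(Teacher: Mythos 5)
Your argument is correct and is precisely the ``standard para-differential calculus'' proof that the paper itself omits, deferring to Bahouri--Chemin--Danchin: Bony decomposition, uniform $L^\infty$ control of $S_{j-1}f$ for the two paraproducts, and $\ell^1$-summability of the kernel $2^{ks}\mathbf{1}_{k\leq N_0}$ for the remainder, which is exactly where $s>0$ is used. The only cosmetic issue is that you use $q$ both as the Besov summability index and as the dyadic block index; otherwise the proof is complete.
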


We skip the proof of this lemma, which can be achieved via standard para-differential calculus. See \cite{bcd11}, for instance. 

\subsection{On Riesz operator and the flow map} We introduce the commutator operator 
\begin{equation*}
	[\mathcal R, \Phi] \omega \bydef \mathcal R(\omega \circ \Phi)- \mathcal R(\omega)\circ \Phi,
\end{equation*}
where $\mathcal R$ is given by \eqref{Riesz:def}, and $\Phi $ is the flow map associated with a divergence-free vector field $u$, i.e., 
\begin{equation*}
	\Phi (t,x)= x + \int_0^t u \left( \tau, \Phi(\tau,x) \right) d\tau , \quad \text{for all } (t,x)\in \mathbb R^+\times \mathbb R^d.
\end{equation*}

The following lemma was proved in \cite{EM20}. It will play a central role in our proofs, later on.
\begin{lem}[\cite{EM20}]\label{lemma:commutator:Riesz}
	It holds that 
	\begin{equation*}
		\norm {[\mathcal R,\Phi] \omega }_{B^{\frac{d}{p}}_{p,1}(\mathbb R^d)} \lesssim \max \left \{ \norm {\Phi-\id }_{\dot W^{1,\infty}(\mathbb R^d)} , \norm {\Phi^{-1}-\id }_{\dot W^{1,\infty}(\mathbb R^d)} \right\} \norm {\omega}_{B^{\frac{d}{p}}_{p,1}(\mathbb R^d)},
	\end{equation*}
	for any $p\in (1,\infty)$, where the implicit constant depends only on  dimension and the Riesz operator.
\end{lem}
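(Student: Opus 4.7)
The plan is to exploit that $\Phi$ is measure-preserving (as the flow of a divergence-free field, $\det D\Phi \equiv 1$) together with the Calder\'on--Zygmund structure of $\mathcal R = \partial_{x_1}^2 (-\Delta)^{-1}$. Writing $\mathcal R\omega(x) = \operatorname{p.v.}\!\int K(x-y)\omega(y)\,dy$ for a standard homogeneous kernel $K$ with $|\nabla^k K(\xi)| \lesssim |\xi|^{-d-k}$, the change of variables $y = \Phi^{-1}(z)$ in $\mathcal R(\omega \circ \Phi)$ (whose Jacobian equals one) recasts the commutator as a single integral operator
\[
[\mathcal R, \Phi]\omega(x) = \operatorname{p.v.}\!\int A(x,z)\,\omega(z)\,dz, \qquad A(x,z) \bydef K\bigl(x-\Phi^{-1}(z)\bigr) - K\bigl(\Phi(x)-z\bigr).
\]
The objective then becomes to show that $A$ is itself a Calder\'on--Zygmund kernel whose operator norm is controlled by $M \bydef \max\{\norm{\Phi-\id}_{\dot W^{1,\infty}}, \norm{\Phi^{-1}-\id}_{\dot W^{1,\infty}}\}$.

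The heart of the argument is the pointwise analysis of $A$. Writing the two arguments of $K$ as $(x-z)$ plus a perturbation,
\[
x - \Phi^{-1}(z) = (x-z) + \bigl(z - \Phi^{-1}(z)\bigr), \qquad \Phi(x) - z = (x-z) + \bigl(\Phi(x)-x\bigr),
\]
I would observe that when $M$ is small both displacements are comparable to $|x-z|$, while the \emph{difference} of the two perturbation terms — the displacement of $\Phi^{-1}$ at $z$ minus that of $\Phi$ at $x$ — can be bounded by $M|x-z|$ by reading it as a Lipschitz increment of $\psi = \Phi-\id$ (and symmetrically of $\Phi^{-1}-\id$). The mean value theorem applied to $K$ then yields $|A(x,z)| \lesssim M|x-z|^{-d}$, and a parallel computation differentiating once in $x$ or $z$ produces $|\nabla_{x,z} A(x,z)| \lesssim M|x-z|^{-d-1}$, i.e.\ the H\"ormander smoothness condition with constant $\lesssim M$. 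Coupled with an $L^2$ bound of the same order — obtained either through a $T(1)$-type argument or, more directly, by expanding $C_\Phi$ around the identity and recognising the leading commutator as $[\mathcal R, \psi]\nabla$ (a classical Coifman--Meyer commutator of order $-1$, Lipschitz-controlled by $\nabla\psi$) — one concludes via standard Calder\'on--Zygmund theory that $[\mathcal R, \Phi]$ is bounded on $L^p$ for every $p\in(1,\infty)$ with operator norm $\lesssim M$.

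The final step is to upgrade from $L^p$ to $B^{d/p}_{p,1}$. Decomposing $\omega = \sum_j \Delta_j\omega$, since $\Phi$ is a bi-Lipschitz perturbation of the identity it preserves dyadic frequency supports up to finite overlap, so applying the $L^p$ bound to each block $\Delta_j\omega$ and summing against the weights $2^{jd/p}$ gives
\[
\norm{[\mathcal R,\Phi]\omega}_{B^{d/p}_{p,1}} \lesssim \sum_{j} 2^{j d/p}\, \norm{[\mathcal R,\Phi]\Delta_j\omega}_{L^p} \lesssim M\sum_{j} 2^{j d/p}\, \norm{\Delta_j\omega}_{L^p} = M\,\norm{\omega}_{B^{d/p}_{p,1}}.
\]
The principal obstacle is genuinely extracting the smallness $M$ from the Calder\'on--Zygmund machinery: the raw kernel $A$ has the same order of singularity as $K$ itself, so the gain must be harvested from the \emph{difference} structure in $A$ rather than its size, and must be propagated uniformly across dyadic scales. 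This is precisely where the decomposition of the perturbation terms above, together with the algebra-type nature of $B^{d/p}_{p,1}$ at the critical index (which cleanly accommodates the finite-overlap frequency mixing caused by $C_\Phi$), plays the decisive role.
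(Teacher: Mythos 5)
A preliminary remark: the paper offers no proof of this lemma --- it is imported wholesale from \cite{EM20} --- so your argument has to stand entirely on its own. As written it does not, for two reasons.

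First, your pointwise kernel bounds are taken with respect to the wrong diagonal. The constant $M$ involves only the \emph{homogeneous} seminorms $\norm{\Phi-\id}_{\dot W^{1,\infty}}$ and $\norm{\Phi^{-1}-\id}_{\dot W^{1,\infty}}$; nothing controls $\norm{\Phi-\id}_{L^\infty}$ (a flow that is essentially a large rigid translation has $M=0$ while $\Phi(x)-x$ is huge). Hence the singularity of $A(x,z)=K(x-\Phi^{-1}(z))-K(\Phi(x)-z)$ lies on the twisted diagonal $\{z=\Phi(x)\}$, neither displacement is comparable to $|x-z|$, and both the claim $|A(x,z)|\lesssim M|x-z|^{-d}$ and the bound $M|x-z|$ for the difference of the two perturbation terms are false as stated. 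What is true is that this difference equals $\psi(\Phi^{-1}(z))-\psi(x)$ with $\psi=\Phi-\id$, hence is at most $M|x-\Phi^{-1}(z)|$, which yields $|A(x,z)|\lesssim M|\Phi(x)-z|^{-d}$ when $M\le 1/2$. The clean repair is to conjugate: $[\mathcal R,\Phi]\omega=(\mathcal R-C_\Phi\mathcal R C_\Phi^{-1})(\omega\circ\Phi)$ with $C_\Phi f\bydef f\circ\Phi$, whose kernel $K(x-z)-K(\Phi(x)-\Phi(z))$ is singular on the true diagonal and obeys the desired bounds because $|(x-z)-(\Phi(x)-\Phi(z))|=|\psi(x)-\psi(z)|\le M|x-z|$.

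Second, and this is the substantive gap: the final display is unjustified, and it is precisely where the lemma lives. Composition with a bi-Lipschitz map does \emph{not} preserve dyadic frequency supports up to finite overlap --- $\Delta_j\omega\circ\Phi$ has Fourier support spread over all scales --- so the inequality $\norm{[\mathcal R,\Phi]\omega}_{B^{d/p}_{p,1}}\lesssim\sum_j 2^{jd/p}\norm{[\mathcal R,\Phi]\Delta_j\omega}_{L^p}$ does not follow from $L^p$ boundedness of the commutator. One must establish off-diagonal decay of the form $\norm{\Delta_k[\mathcal R,\Phi]\Delta_j}_{L^p\to L^p}\lesssim M2^{-\epsilon|k-j|}$ with $\epsilon>d/p$ and sum the resulting double series; this is where the cancellations $T(1)=T^*(1)=0$ (which do hold here, by measure preservation and the vanishing spherical means of $K$) and the kernel smoothness must actually be spent, and it is the technical heart of the lemma rather than a routine afterthought. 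Worse, $M$ controls only one derivative of the kernel (higher derivatives of $\Phi$ are not bounded by $M$), and first-order kernel smoothness via generic Calder\'on--Zygmund/molecular arguments only reaches Besov regularity $s<1$, whereas the lemma is claimed for all $p\in(1,\infty)$, i.e.\ up to $s=d/p$ arbitrarily close to $d$, and is applied in this paper exactly at the threshold $s=d/p=1$. Closing this requires the finer frequency-localized kernel estimates of \cite{EM20}, not soft Calder\'on--Zygmund theory. (The $L^2$ step is likewise only gestured at: expanding $C_\Phi$ around the identity is illegitimate without $L^\infty$ smallness of $\Phi-\id$, though the $T(1)$ route you mention can be made rigorous.)
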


The next lemma establishes the continuity in Besov spaces of the composition with the bi-Lipschitz volume-preserving flow $\Phi $ introduced above.

\begin{lem}[\cite{V98}]\label{lemma:flow:Besov}
	It holds that 
	\begin{equation*}
		\norm {f\circ \Phi }_{B^0_{\infty,1}(\mathbb R^d)} \lesssim \left(1 + \log \left( \norm {\Phi}_{\dot W^{1,\infty}(\mathbb R^d)}\norm {\Phi^{-1}}_{\dot W^{1,\infty}(\mathbb R^d)} \right) \right) \norm f_{B^0_{\infty,1} (\mathbb R^d)},
	\end{equation*}
	for any $f\in B^0_{\infty,1}$, where the implicit constant depends only on the dimension.
\end{lem}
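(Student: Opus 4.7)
The plan is to follow Vishik's classical composition argument, splitting the Littlewood--Paley interaction into a near-diagonal ``logarithmic band'' and an off-diagonal contribution that decays geometrically in the frequency gap, with the logarithm arising from balancing the two.

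I would first write $f = \sum_j \Delta_j f$ so that
\[
\Delta_k (f\circ \Phi) = \sum_j \Delta_k\bigl[(\Delta_j f) \circ \Phi\bigr],
\]
and $\norm{f\circ \Phi}_{B^0_{\infty,1}} = \sum_k \norm{\Delta_k(f\circ \Phi)}_{L^\infty}$. Fixing a threshold $N \geq 1$ to be optimized later, I would split the double sum over $(j,k)$ into the three regions $|j-k| \leq N$, $k > j+N$, and $k < j-N$. In the near-diagonal band, the bi-Lipschitz property of $\Phi$ gives the trivial bound $\norm{\Delta_k[(\Delta_j f)\circ \Phi]}_{L^\infty} \lesssim \norm{\Delta_j f}_{L^\infty}$; summing over $|j-k|\leq N$ and then over $k$ contributes roughly $N \, \norm{f}_{B^0_{\infty,1}}$.

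For the off-diagonal regions I would use two complementary Bernstein-type gains. In the region $k > j+N$ (high-frequency output, low-frequency ingredient), the function $(\Delta_j f)\circ \Phi$ is Lipschitz with $\norm{\nabla[(\Delta_j f)\circ \Phi]}_{L^\infty} \lesssim 2^j \norm{\nabla \Phi}_{L^\infty} \norm{\Delta_j f}_{L^\infty}$, so that $\Delta_k$ supplies a factor $2^{-k}$. In the region $k < j - N$ (low-frequency output, high-frequency ingredient), I would use the volume-preserving change of variables $z = \Phi(y)$ to rewrite
\[
\Delta_k\bigl[(\Delta_j f)\circ \Phi\bigr](x) = \int \phi_k\bigl(x - \Phi^{-1}(z)\bigr)\,(\Delta_j f)(z)\,dz,
\]
then represent $\Delta_j f = 2^{-j}\,\nabla \cdot G_j$ through a slightly enlarged Littlewood--Paley block with $\norm{G_j}_{L^\infty} \lesssim \norm{\Delta_j f}_{L^\infty}$, and integrate by parts in $z$. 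The resulting $z$-gradient of the kernel $\phi_k(x-\Phi^{-1}(\cdot))$ is controlled by $2^k \norm{\nabla \Phi^{-1}}_{L^\infty}$ times an $L^1$-bounded function. Both off-diagonal regions thus yield $2^{-|j-k|}$ geometric decay carrying a factor $\norm{\nabla \Phi}_{L^\infty}$ or $\norm{\nabla \Phi^{-1}}_{L^\infty}$, and summing produces a total contribution bounded by $2^{-N}\bigl(\norm{\nabla \Phi}_{L^\infty} + \norm{\nabla \Phi^{-1}}_{L^\infty}\bigr)\norm{f}_{B^0_{\infty,1}}$.

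Combining the three regions yields $\norm{f\circ \Phi}_{B^0_{\infty,1}} \lesssim \bigl(N + 2^{-N}\norm{\nabla \Phi}_{L^\infty}\norm{\nabla \Phi^{-1}}_{L^\infty}\bigr)\norm{f}_{B^0_{\infty,1}}$, and optimizing by taking $N$ to be the integer part of $\log_2\bigl(\norm{\nabla \Phi}_{L^\infty}\norm{\nabla \Phi^{-1}}_{L^\infty}\bigr)$ (or $N=1$ otherwise) produces exactly the claimed logarithmic estimate. The main obstacle, in my view, is the low-frequency-output case: it is the step that genuinely uses the volume-preserving property (without which the change of variables would leave a Jacobian that spoils the clean bound), and it requires turning the spectral localization of $\Delta_j f$ into a divergence with a bounded primitive via Bernstein so that the frequency gap $2^{-j}$ can be converted into a usable integration-by-parts gain. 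The rest of the argument is essentially bookkeeping of the Littlewood--Paley sums.
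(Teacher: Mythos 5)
The paper offers no proof of this lemma---it is quoted verbatim from Vishik's work [V98]---and your reconstruction is precisely the standard argument behind that citation: the near-diagonal band of width $N$ contributing $N\,\norm{f}_{B^0_{\infty,1}}$, the two complementary off-diagonal gains (the Lipschitz/Bernstein bound carrying $\norm{\nabla\Phi}_{L^\infty}$ for $k>j+N$, and the volume-preserving change of variables plus the divergence-form representation $\Delta_j f=2^{-j}\nabla\cdot G_j$ carrying $\norm{\nabla\Phi^{-1}}_{L^\infty}$ for $k<j-N$), and the optimization $2^N\sim\norm{\nabla\Phi}_{\dot W^{1,\infty}}\norm{\Phi^{-1}}_{\dot W^{1,\infty}}$. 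The argument is correct as sketched; the only detail worth recording is that for a volume-preserving bi-Lipschitz map one has $\norm{\Phi}_{\dot W^{1,\infty}},\norm{\Phi^{-1}}_{\dot W^{1,\infty}}\geq 1$, which is what guarantees both that the logarithm is nonnegative and that $2^{-N}\bigl(\norm{\Phi}_{\dot W^{1,\infty}}+\norm{\Phi^{-1}}_{\dot W^{1,\infty}}\bigr)\lesssim 1$ for your choice of $N$.
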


\subsection{Transport equation in Besov spaces}

We now state the following classical lemma on Besov estimates for transport equations.  
\begin{lem}[\cite{bcd11}]\label{lemma:transport}
	Let $f$ be a regular solution of the transport equation 
	\begin{equation*}
		\partial_t f + u\cdot\nabla f = g, \qquad f|_{t=0}=f_0,
	\end{equation*}
	on some time interval $[0,t]$, for some smooth divergence-free velocity field $u$. Then it holds that
\begin{equation*}
	\norm {f}_{L^\infty([0,t];B^s_{p,q}(\mathbb R^d))} \leq \norm {f_0}_{B^s_{p,q}(\mathbb R^d)} + C\int_0^t \norm {\nabla u(\tau)}_{L^\infty(\mathbb R^d)} \norm {f(\tau)}_{B^s_{p,q}(\mathbb R^d)} d\tau + \norm {g}_{L^1([0,t];B^s_{p,q} (\mathbb R^d))},
\end{equation*}
for any $s\geq 0$, $p,q\in [1,\infty]$, for some constant $C=C(s,p,q)>0$. 
\end{lem}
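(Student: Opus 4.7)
\medskip

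\noindent\textbf{Proof proposal for Lemma \ref{lemma:transport}.} The plan is to reduce the Besov estimate to a frequency-localized $L^p$ estimate by applying a Littlewood--Paley projector $\Delta_j$ to the transport equation, then to control the commutator that appears when $\Delta_j$ is commuted with $u\cdot \nabla$. Concretely, I would apply $\Delta_j$ to the equation to obtain
\begin{equation*}
    \partial_t \Delta_j f + u \cdot \nabla \Delta_j f = \Delta_j g + [u\cdot\nabla,\Delta_j] f ,
\end{equation*}
and then test against $|\Delta_j f|^{p-2}\Delta_j f$ (with the obvious modification for $p=\infty$, via a duality/approximation argument). Since $\div u = 0$, the transport term integrates to zero, and I obtain
\begin{equation*}
    \frac{d}{dt}\norm{\Delta_j f(t)}_{L^p} \leq \norm{\Delta_j g(t)}_{L^p} + \norm{[u\cdot\nabla,\Delta_j] f(t)}_{L^p},
\end{equation*}
so that integration in time gives
\begin{equation*}
    \norm{\Delta_j f(t)}_{L^p} \leq \norm{\Delta_j f_0}_{L^p} + \int_0^t \norm{\Delta_j g(\tau)}_{L^p}\, d\tau + \int_0^t \norm{[u\cdot\nabla,\Delta_j] f(\tau)}_{L^p}\, d\tau .
\end{equation*}

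The main quantitative input is a commutator estimate of Bony type, which I expect to be the chief technical step: one needs to show that there exist nonnegative coefficients $(c_j(\tau))_{j\geq -1}$ with $\bigl\|(c_j(\tau))\bigr\|_{\ell^q} \leq 1$ and
\begin{equation*}
    \norm{[u\cdot\nabla,\Delta_j] f(\tau)}_{L^p} \leq C\, c_j(\tau)\, 2^{-js}\, \norm{\nabla u(\tau)}_{L^\infty}\, \norm{f(\tau)}_{B^s_{p,q}} ,
\end{equation*}
uniformly in $j\geq -1$, for $s\geq 0$ and $p,q\in[1,\infty]$. This is the standard commutator lemma for a divergence-free transport, proved by writing $[u\cdot\nabla,\Delta_j]f$ through Bony's paraproduct decomposition $u\cdot\nabla f = T_u \nabla f + T_{\nabla f} u + R(u,\nabla f)$, commuting each piece with $\Delta_j$, and using that $\Delta_j$ is a convolution with $2^{jd}h(2^j\cdot)$ together with a first-order Taylor expansion for the $T_u$ piece to gain one derivative (hence the factor $\norm{\nabla u}_{L^\infty}$). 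The low-frequency block $\Delta_{-1}$ is handled separately by a direct $L^\infty$ bound on $u$ controlled via $\norm{\nabla u}_{L^\infty}$ (after reducing $u$ to its divergence-free part, or absorbing lower-order terms into the right-hand side in the usual way).

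Once the commutator estimate is in hand, I would multiply the integrated bound on $\norm{\Delta_j f(t)}_{L^p}$ by $2^{js}$, take the $\ell^q_j$ norm, and use the triangle inequality in $\ell^q$ together with Minkowski's inequality to pull the $\ell^q_j$ norm inside the time integral. Summing the commutator contribution against the $\ell^q$ bound on $c_j(\tau)$ produces the term
\begin{equation*}
    C\int_0^t \norm{\nabla u(\tau)}_{L^\infty}\, \norm{f(\tau)}_{B^s_{p,q}}\, d\tau ,
\end{equation*}
while the source term contributes $\norm{g}_{L^1([0,t];B^s_{p,q})}$ and the initial datum contributes $\norm{f_0}_{B^s_{p,q}}$. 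Taking the supremum over $t$ on the left-hand side then yields the claimed inequality. I would note that the statement is phrased prior to applying Grönwall's lemma, so no further iteration is needed; the only nontrivial ingredient is the commutator bound above, which is the expected obstacle and which is carried out in detail in \cite{bcd11}.
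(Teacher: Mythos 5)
The paper does not actually prove Lemma \ref{lemma:transport}; it is quoted as a classical result and attributed to \cite{bcd11}, so there is no in-paper argument to compare yours against. Your outline is precisely the standard proof from that reference: localize with $\Delta_j$, run an $L^p$ energy estimate using $\div u=0$, invoke the Bony-type commutator bound, then multiply by $2^{js}$ and take $\ell^q$ together with Minkowski's inequality. That architecture is correct and is surely what the citation intends.

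The one genuine issue is the asserted range of validity of the commutator estimate. With only $\norm{\nabla u}_{L^\infty}$ on the right-hand side, the bound $\norm{[u\cdot\nabla,\Delta_j]f}_{L^p}\leq C\,c_j\,2^{-js}\norm{\nabla u}_{L^\infty}\norm{f}_{B^s_{p,q}}$ is guaranteed only for $|s|<1$ (with the borderline case $s=1$, $q=1$ requiring the slightly stronger norm $\norm{\nabla u}_{B^0_{\infty,1}}$); see Lemma 2.100 and Theorem 3.14 of \cite{bcd11}. For larger $s$ the pieces $T_{\nabla f}u$ and $R(u,\nabla f)$ of the Bony decomposition put the high frequencies on $u$, and their contribution is of the order $c_j2^{-js}\norm{\nabla u}_{B^{s-1}_{p,q}}\norm{\nabla f}_{L^\infty}$, which cannot be absorbed into $\norm{\nabla u}_{L^\infty}\norm{f}_{B^s_{p,q}}$ for a general vector field $u$ --- a merely Lipschitz flow does not propagate high Besov regularity. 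So your claim that the commutator bound holds ``uniformly for $s\geq 0$ and $p,q\in[1,\infty]$'' is too strong; the same caveat in fact applies to the lemma as stated. It is harmless in this paper because the lemma is only ever invoked with $f=\omega$, $s=1$, $p=2$, $q=1$ in two dimensions, where $\norm{\nabla u}_{B^0_{\infty,1}}$ and $\norm{\nabla u}_{B^{s-1}_{p,q}}$ are controlled (via Biot--Savart and the energy bound) by $\norm{\omega}_{B^1_{2,1}}=\norm{f}_{B^s_{p,q}}$, so the extra commutator terms land in the same Gr\"onwall term. To make your proof airtight you should either restrict to $s<1$ (plus the $s=1$, $q=1$ endpoint with the $B^0_{\infty,1}$ norm), or record the additional $\norm{\nabla u}_{B^{s-1}_{p,q}}$ contribution explicitly.
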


\subsection{Maxwell equation in Besov spaces} For the sake of completeness, we conclude by a proof of the next lemma about estimates for the maxwell equation
\begin{equation}\label{Maxwell:equation}
	\begin{cases}
		\begin{aligned}
			&\frac{1}{c} \partial_t E - \nabla \times  b +cE  =F, 
			\\
			 &\frac{1}{c} \partial_t b +\nabla  \times   E  = 0  ,
		\end{aligned}
	\end{cases}
\end{equation}
where $F$ is a  given body  force. 
\begin{lem}\label{lemma:Maxwell}
	Given $c\in (0,\infty)$, and let  $(E,b)$   be a smooth solution of \eqref{Maxwell:equation} supplemented with the inputs 
	$$(E_0,b_0 ) \in B^s_{2,1}(\mathbb R^d) \qquad \text{and} \qquad F\in L^1([0,T];B^s_{2,1}(\mathbb R^d)), $$
	for some $s\in \mathbb R$ and $T>0$. Then, it holds that 
	\begin{equation*}
		\norm {(E,B)}_{L^\infty ([0,t]; B^s_{2,1})} \leq 2\norm  {(E_0,B_0)}_{B^s_{2,1}} + 2c\int_0^t \norm {F(\tau) }_{B^s_{2,1}(\mathbb R^d)} d\tau,
	\end{equation*}
	for any $t\in [0,T]$.
\end{lem}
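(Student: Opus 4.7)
The plan is to perform a standard frequency-localized energy estimate that exploits the skew-adjoint structure of the curl coupling in Maxwell's system. I would start by applying a Littlewood--Paley block $\Delta_q$ to both equations of \eqref{Maxwell:equation}. Because $\Delta_q$ commutes with $\partial_t$, with $\nabla\times$, and with scalar multiplication by $c$, the pair $(\Delta_q E, \Delta_q b)$ solves the very same system with source $\Delta_q F$, and no commutator errors arise. This is the structural reason why the bound will not carry a Gronwall-type correction, in contrast to Lemma \ref{lemma:transport}.

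Next, I would test the first localized equation against $c\Delta_q E$ and the second against $c\Delta_q b$, integrate over $\mathbb{R}^d$, and add. The key observation is the cancellation
\begin{equation*}
-\int_{\mathbb{R}^d}(\nabla\times \Delta_q b)\cdot \Delta_q E\,dx + \int_{\mathbb{R}^d}(\nabla\times \Delta_q E)\cdot \Delta_q b\,dx = 0,
\end{equation*}
which follows from integration by parts. Moreover, the damping term contributes a non-negative quantity $c^{2}\|\Delta_q E\|_{L^2}^{2}$ which can simply be discarded. One is then left with
\begin{equation*}
\frac{1}{2}\frac{d}{dt}\bigl(\|\Delta_q E\|_{L^2}^{2} + \|\Delta_q b\|_{L^2}^{2}\bigr) \leq c\,\|\Delta_q F\|_{L^2}\,\|\Delta_q E\|_{L^2}.
\end{equation*}
Bounding the right-hand side by $c\,\|\Delta_q F\|_{L^2}\,(\|\Delta_q E\|_{L^2}^{2}+\|\Delta_q b\|_{L^2}^{2})^{1/2}$ and dividing by the square root of the energy produces a linear-in-time differential inequality, which integrates to
\begin{equation*}
\|(\Delta_q E, \Delta_q b)(t)\|_{L^2} \leq \|(\Delta_q E_0, \Delta_q b_0)\|_{L^2} + c\int_0^t \|\Delta_q F(\tau)\|_{L^2}\, d\tau.
\end{equation*}

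Finally, multiplying by $2^{qs}$ and summing over $q \geq -1$ recovers the desired $B^{s}_{2,1}$ estimate. The factor $2$ in the statement is a cosmetic artifact of the convention $\|(E,b)\|_{B^{s}_{2,1}} \leq \|E\|_{B^{s}_{2,1}} + \|b\|_{B^{s}_{2,1}}$ combined with $\max\{\|E\|_{L^2},\|b\|_{L^2}\} \leq \|(E,b)\|_{L^2}$. There is really no genuine obstacle to overcome here: everything reduces to the skew-adjointness of $\nabla\times$ on $L^2$ and the non-negativity of the damping term. The estimate is insensitive to the value of $s$ and to $c \in (0,\infty)$ precisely because the $L^2$-bilinear pairing in the energy identity does not see the hyperbolic character of the system.
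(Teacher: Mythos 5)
Your proposal is correct and follows essentially the same route as the paper: frequency localization with no commutator errors, an $L^2$ energy identity in which the curl coupling cancels by skew-adjointness, discarding the nonnegative damping term $c^2\|\Delta_q E\|_{L^2}^2$, dividing by the square root of the localized energy, integrating in time, and then summing $2^{qs}$-weighted blocks. The only cosmetic difference is that you make the cancellation and the origin of the factor $2$ explicit, which the paper leaves implicit.
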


\begin{proof}
	Localizing the frequencies of \eqref{Maxwell:equation} by applying the dyadic blocs $\Delta_j$ (see \cite{bcd11} for the precise  definition), we find that 
	\begin{equation*} 
	\begin{cases}
		\begin{aligned}
			&\frac{1}{c} \partial_t E_j - \nabla \times  b_j +cE _j =F_j, 
			\\
			 &\frac{1}{c} \partial_t b_j +\nabla  \times   E_j   = 0,
		\end{aligned}
	\end{cases}
\end{equation*}
where we set 
\begin{equation*}
	(E_j, b_j, F_j)\bydef   (\Delta_jE,\Delta_jb,\Delta_jF).
\end{equation*}
Thus, by an $L^2$-energy estimate,   we obtain that 
\begin{equation*}
	\norm {(E_j,b_j)(t)}_{L^2} \frac{d}{dt} \left( \norm {(E_j,b_j)(t)}_{L^2} \right) + c^2 \norm {E_j(t)}_{L^2}^2=c \int_{\mathbb R^d} (F_j \cdot E_j)(t,x) dx \leq \norm {F_j(t)}_{L^2} \norm {E_j(t)}_{L^2} ,
\end{equation*}
for any $t\in [0,T]$. Therefore, it follows that 
\begin{equation*}
	\norm {(E_j,b_j)(t)}_{ L^2} \leq \norm {(E_j,b_j)|_{t=0}}_{L^2}  +   \norm {F_j}_{L^1 ([0,t] ;L^2)}.
\end{equation*}
At last, multiplying both sides by $2^{js}$, taking the sum over $j\geq -1$, and employing the embedding $\ell ^1 \hookrightarrow \ell ^2$ yields the desired control. This completes the proof of the lemma.
\end{proof}

\section{Local-in-time smooth solutions}\label{section:existence}

In this section, we show that the equations \eqref{EM'} around a steady magnetic background admits a unique local-in-time smooth solution. Before delving into the specifics of that, we first establish a formal $L^2$ energy estimate for the perturbation $(u, E, b)$.  

It is important to note that working on the whole space $\mathbb{R}^2$ does not allow us to recover an $L^2$-energy bound for the fluctuation $(u, E, b)$ from the standard energy bound on the original solution $(u, E, B)$ of \eqref{EM}. This is because only one of them (speaking of the magnetic field) can have a finite energy when considering the entire plane $\mathbb{R}^2$. However, it turns out that the perturbation $(u, E, b)$ itself enjoys a similar $L^2$-energy bound, provided that its initial data has a finite $L^2$-norm. 

The following lemma formalizes this result, which in turn plays a crucial role in the proof of the ill-posedness of the system \eqref{EM}. 

\begin{lem}[Energy]\label{lemma:energy}
Any smooth solution of \eqref{EM'} enjoys the bound 
\begin{equation*}
	\norm {(u,E,b)(t)}_{L^2} \leq  \norm {(u_0,E_0,b_0)}_{L^2},
\end{equation*}
	for all $t\geq 0$.
\end{lem}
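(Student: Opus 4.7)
The plan is to perform the standard $L^2$ energy estimate on the velocity formulation \eqref{EM:perturbation:velocity}, which is equivalent to \eqref{EM'} in the smooth regime by the Biot--Savart law. The structural idea behind the estimate is the usual Lorentz cancellation: the work of the magnetic body force on the fluid is exactly compensated by the $j\cdot E$ contribution coming from Amp\`ere's equation, so that the only surviving contribution is the non-positive Ohmic dissipation in the effective current $j_b-u_2$.

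First I would test the velocity equation of \eqref{EM:perturbation:velocity} against $u$. Since $\div u=0$, both the convective term and the pressure integrate away, leaving $\frac{1}{2}\frac{d}{dt}\norm{u}_{L^2}^2=\int (j_b-u_2)\bigl(b^\perp\cdot u+u_2\bigr)\,dx$. Applying the two-dimensional identity $b^\perp\cdot u=-u^\perp\cdot b$ and substituting $u^\perp\cdot b=j_b-cE$ from Ohm's law, the bracket becomes $cE-(j_b-u_2)$, so the velocity balance reads $\frac{1}{2}\frac{d}{dt}\norm{u}_{L^2}^2 = c\int E(j_b-u_2)\,dx - \norm{j_b-u_2}_{L^2}^2$.

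Next I would test the Amp\`ere equation against $cE$ and dot the Faraday equation with $cb$. A single integration by parts yields $\int b\cdot\nabla^\perp E\,dx=-\int E\,(\nabla^\perp\cdot b)\,dx$, so the two Maxwell coupling terms cancel upon summation, leaving $\frac{1}{2}\frac{d}{dt}(\norm{E}_{L^2}^2+\norm{b}_{L^2}^2)=-c\int E(j_b-u_2)\,dx$. Adding this to the velocity identity cancels the cross term $c\int E(j_b-u_2)\,dx$ and produces $\frac{1}{2}\frac{d}{dt}(\norm{u}_{L^2}^2+\norm{E}_{L^2}^2+\norm{b}_{L^2}^2)=-\norm{j_b-u_2}_{L^2}^2\leq 0$. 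Integrating in time and taking a square root then yields the claimed bound.

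I do not anticipate a serious obstacle; every step is classical and the smoothness hypothesis makes all integrations by parts legitimate. The only point worth some care is the bookkeeping of signs through the 2D perpendicular notation ($\nabla^\perp$, $b^\perp$, $u^\perp$), where the identity $u\cdot b^\perp=-u^\perp\cdot b$ combined with Ohm's law is precisely what exposes the cancellation between the Lorentz work and the electromagnetic energy balance.
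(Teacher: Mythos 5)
Your proof is correct and follows essentially the same route as the paper: test the velocity equation of \eqref{EM:perturbation:velocity} with $u$ and the Maxwell part with $(cE,cb)$, use Ohm's law and the identity $b^\perp\cdot u=-u^\perp\cdot b$ to expose the cancellation, and conclude. The only cosmetic difference is that you identify the dissipation as the perfect square $-\norm{j_b-u_2}_{L^2}^2$, whereas the paper keeps $-\norm{j_b}_{L^2}^2-\norm{u_2}_{L^2}^2+2\int j_b u_2$ and absorbs the cross term by Young's inequality; the two are identical.
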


\begin{proof}
	Taking the $L^2_{t,x}$ scalar product   of the first three equations from \eqref{EM'} with $(u,E,b)$, and making use of Ohm's law, yields that 	\begin{equation*}
	\begin{aligned}
		\frac{1}{2}	\norm {(u,E,b)(t)}_{L^2}^2 + \int_0^t \norm {j_b(\tau)} _{L^2}^2 d\tau &+ \int_0^t \norm {u_2(\tau)} _{L^2}^2 d\tau 
		\\
		&= \frac{1}{2} \norm {(u_0,E_0,b_0)}_{L^2}^2 + \int_0^t \int_{\mathbb R^2} (cE+j_b)u_2  -  \int_0^t \int_{\mathbb R^2} u_2 b^\perp \cdot u
		\\
		&= \frac{1}{2} \norm {(u_0,E_0,b_0)}_{L^2}^2 + 2\int_0^t \int_{\mathbb R^2} j_bu_2   
		\\
		&\leq  \frac{1}{2} \norm {(u_0,E_0,b_0)}_{L^2}^2 +\int_0^t \norm {j_b(\tau)} _{L^2}^2 d\tau + \int_0^t \norm {u_2(\tau)} _{L^2}^2 d\tau ,
	\end{aligned}
	\end{equation*}
	for any $t\geq 0$, where we used the basic Young's inequality. Finally, absorbing the  last two terms on the right-hand side by the same quantity from the left-hand side completes the proof.
\end{proof}

Now, we prove the main result of this section, that is the local-in-time well-posedness of \eqref{EM'} in the Besov space $B^2_{2,1}(\mathbb R^2)$.  

\begin{thm}[Local smooth solutions]\label{thm2-EM}
	For any initial datum
	\begin{equation*}
		(u _0,E_0,b_0)\in B^{2}_{2,1}   (\mathbb {R}^2) ,
	\end{equation*}
	there is a unique local-in-time solution of \eqref{EM'}  enjoying the bound
	\begin{equation*}
		(u,E,b)\in C^0\big([0,t];B^2_{2,1}(\mathbb R^2)\big)\cap C^1\big([0,t];B^1_{2,1}(\mathbb R^2)\big),
	\end{equation*}
	for some finite $t>0$.
	More precisely,  there is a universal constant  $C_*>0$ such that  
	\begin{equation*}
		\norm {(u,E,b)(t)}_{  B^{2}_{2,1}   (\mathbb {R}^2))} \leq 4 \norm {(u_0,E_0,b_0)}_{B^{2}_{2,1}  (\mathbb {R}^2)},
	\end{equation*}
	for any positive time $t \leq \frac{1}{C_*(1+c^2+ \mathcal E_0)}$  satisfying 
	\begin{equation*}
		t \norm {(u_0,E_0,b_0)}_{B^{2}_{2,1}  (\mathbb {R}^2)}\leq  \frac{ 1}{  4C_*(1+c^2 + \mathcal E_0)},
	\end{equation*}
	where 
	\begin{equation*}
	\mathcal	E_0 \bydef \norm {(u_0,E_0,b_0)}_{L^2} . 
	\end{equation*}
	\end{thm}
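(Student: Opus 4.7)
The plan is to construct the solution by a Friedrichs-type approximation (or, equivalently, by Picard iteration on a suitable linearization of \eqref{EM'}), and to close a Gronwall inequality for the scalar
\[
X(t) := \norm{(u,E,b)(t)}_{B^{2}_{2,1}(\mathbb R^{2})}.
\]
Time continuity in $B^{2}_{2,1}$ will follow from strong convergence of the approximating sequence in a weaker norm interpolated with the uniform $B^{2}_{2,1}$ bound, and the $C^{1}([0,t];B^{1}_{2,1})$ regularity can be read off directly from \eqref{EM'}, whose right-hand sides lose at most one derivative.

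The core of the argument is a trio of a priori estimates. First, Lemma \ref{lemma:energy} furnishes $\norm{(u,E,b)(\tau)}_{L^{2}} \le \mathcal E_{0}$ uniformly in $\tau$, so that the Biot--Savart identity $u = \nabla^{\perp}\Delta^{-1}\omega$ yields $\norm{u(\tau)}_{B^{2}_{2,1}} \lesssim \mathcal E_{0} + \norm{\omega(\tau)}_{B^{1}_{2,1}}$ (low frequencies controlled by the energy, high frequencies by the vorticity). Second, since $B^{2}_{2,1}(\mathbb R^{2})$ is a Banach algebra by Lemma \ref{lemma:product}, an application of Lemma \ref{lemma:Maxwell} at $s = 2$ with forcing $F = -u^{\perp}\cdot b + u_{2}$ produces
\[
\norm{(E,b)(t)}_{B^{2}_{2,1}} \le 2\norm{(E_{0},b_{0})}_{B^{2}_{2,1}} + 2c \!\int_{0}^{t}\! \norm{u(\tau)}_{B^{2}_{2,1}}\bigl(1 + \norm{b(\tau)}_{B^{2}_{2,1}}\bigr) d\tau.
\]
Third, applying the transport estimate Lemma \ref{lemma:transport} in $B^{1}_{2,1}$ to the vorticity equation from \eqref{EM'}, and bounding the forcings $b\cdot\nabla(j_{b}-u_{2})$, $\partial_{x_{1}}j_{b}$ and $\mathcal R\omega$ through $j_{b} = cE + u^{\perp}\cdot b$, the algebra property, and the continuity of $\mathcal R$ on $B^{1}_{2,1}$, leads to
\[
\norm{\omega(t)}_{B^{1}_{2,1}} \le \norm{\omega_{0}}_{B^{1}_{2,1}} + C \!\int_{0}^{t}\! \Bigl(\norm{\nabla u(\tau)}_{L^{\infty}}\norm{\omega(\tau)}_{B^{1}_{2,1}} + (1+c)\bigl(1 + X(\tau)\bigr)^{2}X(\tau)\Bigr) d\tau.
\]

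Summing the three estimates and using the embedding $B^{2}_{2,1} \hookrightarrow L^{\infty}$ for $\nabla u$ yields a closed inequality of the schematic form
\[
X(t) \le 2 X(0) + C\bigl(1 + c^{2} + \mathcal E_{0}\bigr) \!\int_{0}^{t}\! \bigl(1 + X(\tau)\bigr)^{2} X(\tau)\, d\tau,
\]
where the factor $1 + c^{2}$ collects contributions from $cE$ inside the Lorentz forcing of the vorticity equation together with the prefactor $2c$ in Lemma \ref{lemma:Maxwell}. A standard bootstrap then delivers $X(t) \le 4 X(0)$ on exactly the stated time interval, for a suitable universal constant $C_{*}$. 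Uniqueness is obtained by repeating the same calculation at a lower regularity (for instance, $L^{2}$ for all variables) on the difference of two solutions, the algebra property being replaced by $L^{\infty}\cdot L^{2}$ bounds with the $L^{\infty}$ factor supplied by the $B^{2}_{2,1}$ control of one of the two solutions. The main obstacle in the whole scheme is the apparent loss of one derivative for $(E,b)$ in Maxwell's equations, which is exactly what Lemma \ref{lemma:Maxwell} circumvents via a dyadic $L^{2}$ energy argument; once this is absorbed into the toolbox, the remaining work is a routine (if notationally heavy) implementation of the Gronwall bootstrap above.
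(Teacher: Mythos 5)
Your overall scheme is the same as the paper's (a priori estimates via the transport lemma for the vorticity in $B^1_{2,1}$, the dyadic Maxwell energy estimate at $s=2$, product laws, then a bootstrap; existence by approximation--compactness and uniqueness at lower regularity), but there is a genuine gap at the closure step. Your final inequality
\begin{equation*}
X(t) \le 2X(0) + C(1+c^2+\mathcal E_0)\int_0^t (1+X(\tau))^2 X(\tau)\,d\tau
\end{equation*}
is \emph{cubic} in $X$, and a bootstrap on a cubic inequality only yields $X(t)\le 4X(0)$ under the smallness condition $t\,(1+X(0))^2 \lesssim (1+c^2+\mathcal E_0)^{-1}$, i.e.\ $t\,\norm{(u_0,E_0,b_0)}_{B^2_{2,1}}^2\lesssim 1$ for large data. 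The theorem asserts the bound under the weaker (linear-in-the-data) condition $t\,\norm{(u_0,E_0,b_0)}_{B^2_{2,1}}\lesssim (1+c^2+\mathcal E_0)^{-1}$, so your argument does not prove the statement as written. This is not a cosmetic issue: in the proof of Theorem \ref{main:thm} the data satisfies $\norm{\omega_{0,N}}_{B^1_{2,1}}\sim \varepsilon N$ and the solution must survive up to times $t\sim 1/(\varepsilon N)$ with $\varepsilon t N$ of order one; a cubic closure only guarantees existence up to $t\sim (\varepsilon N)^{-2}$, on which the leading Riesz contribution $t\norm{\mathcal R\omega_0}_{L^\infty}\sim \varepsilon t N$ degenerates and the ill-posedness mechanism is lost.

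The missing ingredient is the interpolation step that the paper isolates in \eqref{interepolation:u:b}: instead of estimating the intermediate norms $\norm{b}_{B^1_{2,1}}$ and $\norm{u}_{B^1_{2,1}}$ by the top norm $X$, use
\begin{equation*}
\norm{b}_{B^1_{2,1}}\lesssim \norm{b}_{L^2}^{1/2}\norm{b}_{B^2_{2,1}}^{1/2},\qquad
\norm{u}_{B^1_{2,1}}\lesssim \norm{u}_{L^2}^{1/2}\norm{\omega}_{B^1_{2,1}}^{1/2},
\end{equation*}
together with the global energy bound of Lemma \ref{lemma:energy}, so that in every trilinear term one factor of the solution size is replaced by $\mathcal E_0^{1/2}$. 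After Young's inequality this turns the cubic terms into $\mathcal E_0$-weighted quadratic ones, giving exactly the paper's closure $f(t)\le 2f_0 + Ct(1+c^2+\mathcal E_0)f(t)+Ct(1+c^2+\mathcal E_0)f(t)^2$ and hence the stated time of existence. (The paper's remark following the proof makes precisely this point: the naive embedding $B^2_{2,1}\hookrightarrow B^1_{2,1}$ produces a cubic bound and a time of existence of the wrong order.) Note also that in your schematic inequality the factor $\mathcal E_0$ in the prefactor has no visible source; in the correct argument it enters exclusively through this interpolation, which is another sign the step is missing rather than merely suppressed.
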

 
 \begin{proof} We will only focus our attention on obtaining the necessary a priori estimate, assuming that one has already constructed a smooth approximate solution of \eqref{EM'}. The rigorous justification of the existence of solutions can be done via a routine approximation and compactness procedure. See for instance \cite{ah} for an admissible scheme of \eqref{EM} that applies to \eqref{EM'} as well.
 
   It  is also more convenient to perform the computations on the vorticity formulation. Thus, we begin by applying Lemma \ref{lemma:transport} to obtain that 
 	\begin{equation*} 
 			\begin{aligned}
 				\norm {\omega(t)}_{B^{1}_{2,1}  } 
 		&	\lesssim  	\norm {\omega_0}_{B^{1}_{2,1} }+ \int_0^t \left(  \norm {\nabla u(\tau )}_{L^\infty}  + 1\right)	\norm {\omega(\tau )}_{ B^{1}_{2,1} } d\tau 
 		\\
 		&\quad + \norm {b\cdot \nabla  u_2}_{L^1_t B^{1}_{2,1} }   + \norm { b \cdot \nabla j_b}_{L^1_t B^{1}_{2,1} } + \norm {\nabla j _b}_{L^1_t B^{1}_{2,1} }
 		\\
 		&	\lesssim  	\norm {\omega_0}_{B^{1}_{2,1} }+ t(1+ \norm \omega_{L^\infty_t B^1_{2,1}}) \norm \omega_{L^\infty_t B^1_{2,1}} 
 		\\
 		& \quad + \norm {b\cdot \nabla  u_2}_{L^1_t B^{1}_{2,1} }   + \norm { b \cdot \nabla j_b}_{L^1_t B^{1}_{2,1} } + \norm {\nabla j_b }_{L^1_t B^{1}_{2,1} },
 			\end{aligned}
 			\end{equation*}
 			for any $t\geq 0$, where we used the embedding 
 			\begin{equation*}
 				B^1_{2,1}\hookrightarrow L^\infty (\mathbb R^2)
 			\end{equation*}
 			together with the fact that Riesz's operator is bounded on Besov spaces.
 			 Therefore, applying the product laws from Lemma \ref{lemma:product} yields that 
 			\begin{equation*}
 			\begin{aligned}
 				\norm {\omega(t)}_{B^{1}_{2,1}  }  
 			&\lesssim  	\norm {\omega_0}_{B^{1}_{2,1}  }+ t(1+ \norm { b }_{L^\infty _{t} B^1_{2,1} }+  \norm \omega_{L^\infty_t B^1_{2,1}}) \norm \omega_{L^\infty_t B^1_{2,1}} 
 			\\ 
 			& \quad + t(1+ \norm { b }_{L^\infty _{t} B^1_{2,1} })\norm { \nabla j_b}_{L^\infty _tB^{1}_{2,1} }.  
 			\end{aligned}
 			\end{equation*}
 			
 		Next, expending the expression of $j_b$ by employing the definition of Ohm's law, and applying the product laws from Lemma \ref{lemma:product} to estimate the resulting terms leads to
 		\begin{equation*}
 			\begin{aligned}
 				\norm {\omega(t)}_{B^{1}_{2,1}  }  
 			&\lesssim  	\norm {\omega_0}_{B^{1}_{2,1}  }+ t(1+ \norm { b }_{L^\infty _{t} B^1_{2,1} }+  \norm \omega_{L^\infty_t B^1_{2,1}}) \norm \omega_{L^\infty_t B^1_{2,1}} 
 			\\ & \quad + t(1+ \norm { b }_{L^\infty _{t} B^1_{2,1} })\left( \norm { c\nabla E }_{L^\infty _tB^{1}_{2,1} } + \norm { \nabla ( u^\perp \cdot b)}_{L^\infty _tB^{1}_{2,1} } \right)
 			\\
 			&\lesssim  	\norm {\omega_0}_{B^{1}_{2,1}  }+ t(1+ \norm { b }_{L^\infty _{t} B^1_{2,1} }+  \norm \omega_{L^\infty_t B^1_{2,1}}) \norm \omega_{L^\infty_t B^1_{2,1}} 
 			\\ & \quad + t(1+ \norm { b }_{L^\infty _{t} B^1_{2,1} })\left( \norm { c\nabla E }_{L^\infty _tB^{1}_{2,1} } + \norm { u}_{L^\infty _tB^{1}_{2,1} } \norm b_{B^2_{2,1}} +  \norm { b}_{L^\infty _tB^{1}_{2,1} } \norm \omega _{B^1_{2,1}} \right) .
 			\end{aligned}
 			\end{equation*}
 				Hence, by further employing the interpolation inequalities 
 	\begin{equation}\label{interepolation:u:b}
 		\norm b_{L^\infty_{t} B^1_{2,1}} \lesssim \norm b_{L^\infty_t L^2}^\frac{1}{2} \norm { b}_{B^2_{2,1}}^\frac{1}{2} \qquad \text{and} \qquad \norm u_{L^\infty_{t} B^1_{2,1}} \lesssim \norm u_{L^\infty_t L^2}^\frac{1}{2} \norm {\omega}_{B^1_{2,1}}^\frac{1}{2},
 	\end{equation}
 	and the energy bound from Lemma \ref{lemma:energy}, we infer that   
 	   \begin{equation}\label{bound:omega}
 			\begin{aligned}
 				\norm {\omega(t)}_{B^{1}_{2,1}  }   
 			&\lesssim  	\norm {\omega_0}_{B^{1}_{2,1}  }+ t\left (1+\mathcal E_0^\frac{1}{2} \norm { b}_{B^2_{2,1}}^\frac{1}{2}+  \norm \omega_{L^\infty_t B^1_{2,1}}\right ) \norm \omega_{L^\infty_t B^1_{2,1}} 
 			\\ & \quad + t\left (1+ \mathcal E_0^\frac{1}{2} \norm { b}_{B^2_{2,1}}^\frac{1}{2}\right )\left( \norm { cE }_{L^\infty _tB^{2}_{2,1} } + \mathcal E_0^\frac{1}{2} \norm {\omega }_{B^1_{2,1}}^\frac{1}{2} \norm b_{B^2_{2,1}} + \mathcal E_0^\frac{1}{2} \norm { b}_{B^2_{2,1}}^\frac{1}{2}\norm \omega _{B^1_{2,1}} \right) .
 			\end{aligned}
 			\end{equation}

 	Now, we move on to estimate $(E,b)$ in $L^\infty _t B^2_{2,1}$. To that end,  applying Lemma \ref{lemma:Maxwell} yields that 
 	\begin{equation}\label{bound:Eb}
 		\begin{aligned}
 			 \norm {(cE,b)}_{L^\infty_t B^{2}_{2,1}}  
 			&\leq    2\norm {(cE_0,b_0)}_{B^{2}_{2,1}} + 2c^2\norm {u^\perp \cdot b}_{L^1_t B^2_{2,1}} + c^2 \norm {u_2}_{L^1_t B^2_{2,1}}
 			\\
 			&\leq  2 \norm {(cE_0,b_0)}_{B^{2}_{2,1}} + C \left( c^2 t \norm {\omega }_{L^\infty _t B^1_{2,1}}\left( 1+\norm {  b}_{L^\infty _t B^2_{2,1}} \right) \right),
 		\end{aligned}
 	\end{equation}
 	for some universal constant $C>0$,
 	where Lemma \ref{lemma:product} has been used, once again.
 	 	
 	All in all, introducing the time-functional 
 	\begin{equation*}
 		f(t) \bydef \norm {\omega}_{L^\infty_t B^1_{2,1}} + \norm {(E,b)}_{L^\infty_t B^2_{2,1}} ,
 	\end{equation*}
 	 and combining \eqref{bound:omega} and \eqref{bound:Eb}, we end up with the control  
 	\begin{equation}\label{f:bound:existence}
 		f(t) \leq  2 f_0 + Ct (1+c^2+ \mathcal E_0) f(t)+ Ct(1+c^2 + \mathcal E_0  ) f^2(t) ,
 	\end{equation}
 	for up to possibly suitably change the constant $C>0$, and for any $t\geq 0$.
 	
 		Now, defining 
 	\begin{equation*} 
 		T\bydef \sup  \left \{ t \in \left (0, \frac{1}{4C(1+c^2 + \mathcal E_0)} \right) : tf(t) < \frac{ 1}{  4C(1+c^2+ \mathcal E_0)}  \right \}
 	\end{equation*}
 	yields that 
 	\begin{equation}\label{f:bound}
 		f(t) \leq 4 f_0, \quad \text{for all} \quad t\leq T. 
 	\end{equation}
 	To conclude, if one imposes the condition that 
 	\begin{equation*}
 		tf_0 < \frac{ 1}{  16C(1+c^2 + \mathcal E_0)},
 	\end{equation*}
 	then it follows, by a standard continuity argument, that the bound \eqref{f:bound} would be valid for any positive  time satisfying 
 	$$ t \leq \frac{1}{4C (1+c^2+ \mathcal E_0)} \quad \text{and} \quad  tf_0 < \frac{ 1}{  16C(1+c^2 + \mathcal E_0)}.$$
 	This concludes the proof of the a priory bound on the solution, which implies the existence of solution via a usual approximation and compactness procedure.

 	The uniqueness component of the statement can be recovered  along the same lines as it was previously done for the original Euler--Maxwell system in \cite{ah}. We skip the details of that, thereby completing the proof of the theorem.
 \end{proof}
 
 \begin{rem}
 	Notice that we have paid a particular attention to the estimate of the vorticity   by using the interpolation inequalities \eqref{interepolation:u:b}, 	and eventually employing the energy bound from Lemma   \ref{lemma:energy}, rather than naively using the embedding
 	\begin{equation}\label{embedding_nonhom}
 		B^2_{2,1} \hookrightarrow B^1_{2,1}(\mathbb R^2).
 	\end{equation}
 	 This procedure is motivated by the aim of obtaining a quadratic nonlinear estimate for the time functional $f(t)$, see \eqref{f:bound:existence}. Indeed, the naive aforementioned embedding would lead to a cubic type of the same bound on $f(t)$. The benefit of all of that is to end up with a time of existence that is of order one, instead of a time of order zero that would be suggested by the naive approach of using the non-homogeneous embedding \eqref{embedding_nonhom}. See also the remark after Corollary \ref{corollary:Eb:bound} where the scaling of the initial data that will be used in the proof of Theorem \ref{main:thm} is   briefly discussed. 
 \end{rem}
 
 \section{Low-vs-high order components of Lorentz force around a   magnetic background}\label{section:Lorentz:force}
 
 In this section, we perform a simple analysis that allows us to compare the different terms from Lorentz force, after expending the magnetic field around the steady state $B=(1,0,0)$, for instance. As a result, we will eventually  identify two essential behaviors: The dominant Riesz part $\mathcal R \omega$, which will be of order one  within a specific time-space scaling, and the remaining ``negligeable''      terms from Lorentz force, which will be of order zero within the same preceding scaling. In particular,  this is the main observation that allows us to conclude that the vorticity of Euler--Maxwell with a magnetic field around an horizontal steady background is essentially driven by the equation  
 \begin{equation*}
 	\partial_t \omega + u\cdot \nabla \omega = \mathcal R \omega + o (1), 
 \end{equation*} 
 within a specific time interval. 
 Consequently, the mild ill-posedness follows directly by adapting the proof from \cite{EM20}, as we shall see in the next section.
 
  We have chosen to present the content of this part of the paper in a sort of ``independent'' section as we believe that it will be very useful in an upcoming work on a relevant problem about the Euler--Maxwell system.
 
 Let us get started by the setup.
 In the remaining sections, we will assume that 
 \begin{equation}\label{assumption:data}
 	\norm {(cE_0,b_0)}_{B^2_{2,1}}  \leq  \norm {u_0}_{B^2_{2,1}}
 \end{equation}
 holds. Accordingly, we
  denote $T^*$ the positive time given by 
 \begin{equation*} 
 	T^*\bydef \frac{1}{C_*(1+c^2+ \mathcal E_0)}\min\left \{ 1, \frac{1}{2\norm {u_0}_{B^2_{2,1}} } \right\},
 \end{equation*} 
 which will correspond to the time of existence of the strong solution given by Theorem \ref{thm2-EM}. Moreover, by virtue of the same theorem, we have that 
 \begin{equation}\label{bound:omega:2}
 	\norm {\omega}_{L^\infty_t B^1_{2,1}} \lesssim   \norm {\omega_0}_{B^1_{2,1}},
 \end{equation} 
 for any $t\leq T^*$.
 
 As a first direct consequence of Theorem \ref{thm2-EM}, we have the following  corollary.
 \begin{cor}\label{corollary:Eb:bound} Assuming \eqref{assumption:data}, it then  holds that the solution of \eqref{EM'} established in Theorem \ref{thm2-EM} enjoys the bound   
 	\begin{equation*}
 		\norm {(cE,b)}_{L^p_t B^2_{2,1}} \lesssim  t^{\frac{1}{p}} \norm {(cE_0,b_0)}_{B^2_{2,1}} + c^2 t^{1+\frac{1}{p}} \norm {\omega_0}_{B^1_{2,1}},
 	\end{equation*}
 	for any $t \leq  T^*$ and $p\in [1,\infty]$.
 \end{cor}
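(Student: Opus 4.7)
The plan is to apply Lemma \ref{lemma:Maxwell} directly to the Maxwell subsystem appearing in \eqref{EM'}, whose source term is $F = -u^\perp \cdot b + u_2$. This is precisely what is already carried out in the proof of Theorem \ref{thm2-EM} to obtain the estimate \eqref{bound:Eb}, namely
\begin{equation*}
\norm{(cE,b)(\tau)}_{B^2_{2,1}} \leq 2\norm{(cE_0,b_0)}_{B^2_{2,1}} + C c^2 \tau \norm{\omega}_{L^\infty_\tau B^1_{2,1}} \left(1 + \norm{b}_{L^\infty_\tau B^2_{2,1}} \right),
\end{equation*}
valid for all $\tau \le T^*$. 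The strategy of the corollary is simply to feed the sharper a priori controls available on $[0,T^*]$ into this inequality, and then to pass to $L^p$ in time by integration.

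First I would use \eqref{bound:omega:2} to replace $\norm{\omega}_{L^\infty_\tau B^1_{2,1}}$ by a multiple of $\norm{\omega_0}_{B^1_{2,1}}$. Next, I would invoke Theorem \ref{thm2-EM} to bound $\norm{b}_{L^\infty_\tau B^2_{2,1}} \lesssim \norm{(u_0,E_0,b_0)}_{B^2_{2,1}}$, which under assumption \eqref{assumption:data} is in turn controlled by $\norm{u_0}_{B^2_{2,1}}$. Combining these, the factor $1 + \norm{b}_{L^\infty_\tau B^2_{2,1}}$ is absorbed into the universal constant on the chosen time interval $[0,T^*]$, where, by construction of $T^*$, the product of $\tau$ and the relevant norms remains bounded. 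This yields the pointwise-in-time estimate
\begin{equation*}
\norm{(cE,b)(\tau)}_{B^2_{2,1}} \lesssim \norm{(cE_0,b_0)}_{B^2_{2,1}} + c^2 \tau \norm{\omega_0}_{B^1_{2,1}}, \qquad \tau \in [0,T^*].
\end{equation*}

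Finally, to obtain the general $L^p_t B^2_{2,1}$ bound claimed in the corollary, I would raise this inequality to the $p$-th power, integrate over $\tau \in (0,t)$, use $(a+b)^p \lesssim_p a^p + b^p$, and take the $p$-th root; this produces precisely the factors $t^{1/p}$ and $c^2 t^{1+1/p}$ in front of the initial data norm and the vorticity norm, respectively. No new analytical tool beyond those already collected in the tool box and in Theorem \ref{thm2-EM} is needed. The only real care required is the bookkeeping used to absorb $1 + \norm{b}_{L^\infty_t B^2_{2,1}}$ into the implicit constant on $[0,T^*]$, which is the one place where assumption \eqref{assumption:data} and the precise definition of $T^*$ both play a role; I would expect this to be the main (yet still routine) obstacle.
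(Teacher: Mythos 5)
Your proposal is correct and follows essentially the same route as the paper, whose own proof is a one-line reduction to the estimate \eqref{bound:Eb} combined with the bounds of Theorem \ref{thm2-EM} (equivalently \eqref{bound:omega:2}) and the assumption \eqref{assumption:data}; you simply spell out the same absorption of the factor $1+\norm{b}_{L^\infty_t B^2_{2,1}}$ and the elementary passage from the pointwise-in-time bound to the $L^p_t$ bound.
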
  

\begin{proof}
	This follows directly from the bound  on the local solution obtained by Theorem \ref{thm2-EM}, which is in turn  employed in the control of the electromagnetic fields given by \eqref{bound:Eb}, under the assumption on the size of the initial data \eqref{assumption:data}. The proof of the corollary is now complete.
\end{proof}
\begin{rem}
	Later on, in the proof of Theorem \ref{main:thm}, we will be considering a family of initial data that exhibits the time-space scaling   
		\begin{equation*}
		t \cdot \norm {\omega_{0,N}}_{B^1_{2,1}}  \sim \varepsilon t N \sim 1  \quad \text{and} \quad  \norm {(cE_{0,N},b_{0,N})}_{B^2_{2,1}} \sim o (\frac{1}{N}), \quad N , \varepsilon^{-1}\gg1.
	\end{equation*}
	 Corollary \ref{corollary:Eb:bound} establishes that, within the preceding scaling,  $L^\infty_t B^{2}_{2,1}$ norms of the electromagnetic fields are of order one, while $L^p_t B^{2}_{2,1}$ norms, for any finite exponent ``$p$'', are of order zero. Fortunately, all the residual terms resulting from the Lorentz force that appear in   writing  Euler--Maxwell equations around the horizontal magnetic background will be negligible in comparaison to the term ``$\mathcal R \omega$'' due to this simple observation.
\end{rem}

In the next proposition, we  recast  the core estimate   that can be derived from the a priori estimate in proof of Theorem \ref{thm2-EM}, yielding a control over the difference between the Lorentz force and mere    partial space derivatives of the velocity field, for a magnetic field around an horizontal steady background.
\begin{prop}[Lorentz force around a magnetic background]\label{prop:source:term}
	Let $(u,E,B)$ be the smooth solution of \eqref{EM} defined on $[0,t]$, for some positive time $t>0$, where 
	\begin{equation*}
		B=(1,0,0) + (b,0), \quad \text{with} \quad b=(b_1,b_2).  
	\end{equation*} 
	Then, it holds that 
	\begin{equation*}
		\norm {\nabla \times (j\times B) - \mathcal R \omega}_{L^1([0,t]; B^1_{2,1}(\mathbb R^2))} \lesssim \left( \mathcal E_0  \norm {\omega_0}_{B^1_{2,1}}+1 \right)\left( \norm {(cE_0,b_0)}_{B^2_{2,1}} +  c^2 t  \norm {\omega_0}_{B^1_{2,1}} \right) t  .
	\end{equation*}
\end{prop}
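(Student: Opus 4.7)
The plan is to begin with the vorticity formulation \eqref{EM'}, which already gives the identity
\[
\nabla\times(j\times B) \;=\; b\cdot\nabla(j_b-u_2) \;+\; \partial_{x_1}j_b \;+\; \mathcal R\omega ;
\]
the Riesz term arises because $-\partial_{x_1}u_2 = \partial_{x_1}^2(-\Delta)^{-1}\omega = \mathcal R\omega$ for the divergence-free velocity $u = \nabla^\perp\Delta^{-1}\omega$. The proposition then reduces to bounding $b\cdot\nabla(j_b-u_2)+\partial_{x_1}j_b$ in $L^1([0,t];B^1_{2,1})$.

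For each of these two pieces I would apply the product law (Lemma \ref{lemma:product}) together with the two-dimensional embedding $B^1_{2,1}\hookrightarrow L^\infty$ and the boundedness of $\partial_{x_1}$ between consecutive Besov levels to reduce matters to controlling
\[
\bigl(1+\|b\|_{L^\infty_t B^2_{2,1}}\bigr)\Bigl(\|cE\|_{L^1_t B^2_{2,1}} + \|u^\perp\!\cdot b\|_{L^1_t B^2_{2,1}} + \|u_2\|_{L^1_t B^2_{2,1}}\Bigr),
\]
where $j_b$ has been expanded via Ohm's law as $cE + u^\perp\!\cdot b$. The bilinear piece $u^\perp\!\cdot b$ is re-estimated by Lemma \ref{lemma:product} as $\|u\|_{L^\infty}\|b\|_{B^2_{2,1}} + \|b\|_{L^\infty}\|u\|_{B^2_{2,1}}$, integrated in time by splitting each factor into one $L^\infty_t$ and one $L^1_t$ contribution.

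Substituting the quantitative bounds from the previous sections then closes the estimate. Corollary \ref{corollary:Eb:bound} controls $\|b\|_{L^\infty_t B^2_{2,1}}$ and $\|cE\|_{L^p_t B^2_{2,1}}$ by $t^{1/p}\bigl(\|(cE_0,b_0)\|_{B^2_{2,1}} + c^2 t\|\omega_0\|_{B^1_{2,1}}\bigr)$, so that the $L^1_t B^2_{2,1}$ norms of $cE$ and $b$ contribute a factor $t$ times the second parenthesis in the target. Theorem \ref{thm2-EM} together with Biot--Savart yields $\|\omega\|_{L^\infty_t B^1_{2,1}} + \|u\|_{L^\infty_t B^2_{2,1}}\lesssim \|u_0\|_{B^2_{2,1}}$, while the interpolation inequalities \eqref{interepolation:u:b} combined with the energy bound of Lemma \ref{lemma:energy} give the refined control $\|u\|_{L^\infty_t L^\infty}\lesssim \|u\|_{L^\infty_t B^1_{2,1}}\lesssim \mathcal E_0^{1/2}\|\omega_0\|_{B^1_{2,1}}^{1/2}$. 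The elementary inequality $\sqrt{xy}\leq 1+xy$ then turns this square root into the factor $1+\mathcal E_0\|\omega_0\|_{B^1_{2,1}}$ that appears in the statement, with the $1$ coming from the linear piece $c\partial_{x_1}E$ and the $\mathcal E_0\|\omega_0\|_{B^1_{2,1}}$ from the bilinear couplings of $u$ with $b$.

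The main obstacle is purely a bookkeeping one. Every bilinear estimate must place exactly one electromagnetic factor in an $L^p_t B^2_{2,1}$ norm controlled by Corollary \ref{corollary:Eb:bound} (this is what produces the factor $\|(cE_0,b_0)\|_{B^2_{2,1}}+c^2 t\|\omega_0\|_{B^1_{2,1}}$ in the bound, thus the ``small'' second factor in the scaling discussed after that corollary), while the complementary fluid factor is measured through the energy-interpolation refinement rather than through the crude embedding $B^2_{2,1}\hookrightarrow L^\infty$. The latter would produce a spurious $\|\omega_0\|_{B^1_{2,1}}$ that cannot be absorbed into $1+\mathcal E_0\|\omega_0\|_{B^1_{2,1}}$ when $\mathcal E_0$ is small, and would in turn destroy the time-space scaling needed in the next section to promote the vorticity from size $1/N$ to size $c_*$.
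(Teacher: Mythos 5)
Your proposal follows the paper's proof in all essentials: the same identity $\nabla\times(j\times B)-\mathcal R\omega = b\cdot\nabla(j_b-u_2)+\partial_{x_1}j_b$, the same use of the product law in the algebra $B^1_{2,1}(\mathbb R^2)$ after expanding $j_b$ via Ohm's law, the same appeal to Corollary \ref{corollary:Eb:bound} so that the electromagnetic factors carry the smallness $\norm{(cE_0,b_0)}_{B^2_{2,1}}+c^2t\norm{\omega_0}_{B^1_{2,1}}$, and the same interpolation-plus-energy refinement \eqref{interepolation:u:b} replacing crude embeddings by $\mathcal E_0^{1/2}\norm{\omega_0}_{B^1_{2,1}}^{1/2}$. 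Your closing paragraph states precisely the bookkeeping principle the paper's proof implements.

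There is, however, one slip in your displayed reduction that contradicts that very principle and would break the estimate if carried out literally. By writing the intermediate bound as $\bigl(1+\norm{b}_{L^\infty_tB^2_{2,1}}\bigr)\bigl(\cdots+\norm{u_2}_{L^1_tB^2_{2,1}}\bigr)$ you have detached the factor $b$ from the term $b\cdot\nabla u_2$: the ``$1$'' in the prefactor then multiplies the standalone quantity $\norm{u_2}_{L^1_tB^2_{2,1}}\lesssim t\bigl(\mathcal E_0+\norm{\omega_0}_{B^1_{2,1}}\bigr)$, which contains no electromagnetic factor and is of order one in the scaling regime $\varepsilon tN\sim 1$ with $\norm{(cE_{0,N},b_{0,N})}_{B^2_{2,1}}=o(1/N)$; it is therefore not dominated by the right-hand side of the proposition. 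The term must be kept as a genuine product, $\norm{b\cdot\nabla u_2}_{L^1_tB^1_{2,1}}\lesssim t\,\norm{b}_{L^\infty_tB^1_{2,1}}\norm{\omega}_{L^\infty_tB^1_{2,1}}$, so that the $b$-factor --- estimated by interpolation against $L^2$ and then through Corollary \ref{corollary:Eb:bound} --- supplies the required smallness, exactly as in the paper. With that correction your argument coincides with the paper's.
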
 

\begin{rem}
	Once again, according to the scaling of the family of the initial data that we will be using in the proof of Theorem \ref{main:thm} later on, it is actually extremly important to interpolate with the energy bound from Lemma \ref{lemma:energy} and have the factor $``\mathcal E_0$'' for the first term in the right-hand side. This guarantees that, within the time scaling regime discussed just after Corollary \ref{corollary:Eb:bound}, the term 
	\begin{equation*}
		\mathcal E_0 (t\norm {\omega_0}_{B^1_{2,1}})^2
	\end{equation*}
	has a zero order, which is very crucial.
\end{rem}
\begin{proof}
	It readily seen that 
	\begin{equation*}
		\nabla \times (j\times B) - \mathcal R \omega = b  \cdot \nabla ( j_b- u_2)     +  \partial_{x_1} j _b ,
	\end{equation*}
	where 
	\begin{equation*}
		j_b=cE + u^\perp \cdot b.
	\end{equation*}
	Therefore, by employing the fact that $B^1_{2,1}(\mathbb R^2) $ is an algebra for the product operation, we find that 	\begin{equation*}
	\begin{aligned}
		& \norm {b\cdot \nabla  u_2}_{L^1_t B^{1}_{2,1} }   + \norm { b \cdot \nabla j_b}_{L^1_t B^{1}_{2,1} } + \norm {\partial_{x_1} j _b}_{L^1_t B^{1}_{2,1} } 
		\\
		& \qquad \qquad\lesssim 
		(1+ \norm { b }_{L^\infty _{t} B^1_{2,1} })\left( c \norm { \nabla E}_{L^1_t B^{1}_{2,1} } +  \norm {  u  }_{L^\infty _{t} B^{1}_{2,1}  }\norm { \nabla   b }_{L^1_t B^{1}_{2,1} } +  t\norm {  b  }_{L^\infty _{t} B^{1}_{2,1}  }\norm { \omega }_{L^\infty _t B^{1}_{2,1} }\right) ,
	\end{aligned}	
		\end{equation*}
		which yields, by virtue of the interpolation inequalities
		\begin{equation*}
 		\norm b_{L^\infty_{t} B^1_{2,1}} \lesssim \norm b_{L^\infty_t L^2}^\frac{1}{2} \norm { b}_{L^\infty_t  B^2_{2,1}}^\frac{1}{2} \qquad \text{and} \qquad \norm u_{L^\infty_{t} B^1_{2,1}} \lesssim \norm u_{L^\infty_t L^2}^\frac{1}{2} \norm {\omega}_{L^\infty_{t} B^1_{2,1}}^\frac{1}{2},
 	\end{equation*}
		combined with the energy bound from Lemma \ref{lemma:energy} and the control \eqref{bound:omega:2},  that
		\begin{equation*}
	\begin{aligned}
		\norm {b\cdot \nabla  u_2}_{L^1_t B^{1}_{2,1} } &   + \norm { b \cdot \nabla j_b}_{L^1_t B^{1}_{2,1} } + \norm {\nabla j _b}_{L^1_t B^{1}_{2,1} } 
		\\
		&\lesssim 
		(1+ \mathcal E_0^\frac{1}{2} \norm {\omega_0}_{B^1_{2,1}}^\frac{1}{2} )\Bigg ( \left( \norm {(cE_0,b_0)}_{B^2_{2,1}} +  c^2 t  \norm {\omega_0}_{B^1_{2,1}} \right)t 
		\\
		& \quad +  \mathcal E_0^\frac{1}{2} \norm {\omega_0}_{B^1_{2,1}}^\frac{1}{2}\left( \norm {(cE_0,b_0)}_{B^2_{2,1}} +  c^2 t  \norm {\omega_0}_{B^1_{2,1}} \right) t \Bigg ) 
		\\
		& \lesssim \left( \mathcal E_0  \norm {\omega_0}_{B^1_{2,1}}+1 \right)\left( \norm {(cE_0,b_0)}_{B^2_{2,1}} +  c^2 t  \norm {\omega_0}_{B^1_{2,1}} \right) t   .
	\end{aligned}	
		\end{equation*}
		This  completes the proof of the proposition.
\end{proof}

\section{Ill-posedness}\label{section:ill-posedness}

This section is devoted to the proof of the main result of the paper, that is the ill-posedness of the Euler--Maxwell system \eqref{EM} in the Yudovich class when the magnetic field is nearby an horizontal magnetic background. We embark with the following lemma (see  \cite[Proposition 1.5]{WZ23}, for instance)
\begin{lem}\label{lemma:data}
	There is a smooth family of functions $(f_N)_{N\in \mathbb N^*}$ with the properties that 
	\begin{equation*}
		\norm {f_N}_{\dot H^{-1} \cap L^2\cap L^\infty } = \mathcal {O}(1),
	\end{equation*}
	whereas
	\begin{equation*}
		\norm {\mathcal R f_N}_{ L^\infty}  \gtrsim N
	\end{equation*}
	and 
	\begin{equation*}
		\norm {f_N}_{  B^\frac{2}{p}_{p,1} } \lesssim  N,
	\end{equation*}
	for any $p\in (1,\infty)$. 
\end{lem}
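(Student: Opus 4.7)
The lemma quantifies the classical failure of $\mathcal R$ to be bounded on $L^\infty$. My strategy is to take $f_N$ as a superposition of $N$ Fourier-lacunary pieces, chosen so that they are essentially orthogonal in $L^2$-based spaces while their Riesz images constructively interfere at a single point, producing an amplification factor of order $N$.

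Concretely, I would fix a single Schwartz function $\phi$ whose Fourier transform is supported in the annulus $\{|\xi|\in[1/2,2]\}$ and a point $y_0\in\mathbb R^2$ such that $\phi(y_0)=0$ (possibly with $\nabla\phi(y_0)=0$ as well) while $(\mathcal R\phi)(y_0)=c_0>0$. Such a $\phi$ exists because $\phi$ and $\mathcal R\phi$ are genuinely distinct Schwartz functions (the symbol $\xi_1^2/|\xi|^2$ is nontrivial), which leaves ample freedom to impose finitely many vanishing conditions on $\phi$ inside a fixed annulus without killing $\mathcal R\phi$ at $y_0$. I then set
\[ \phi_k(x)\bydef\phi(2^k x + y_0), \qquad f_N\bydef\sum_{k=1}^N \phi_k. \]

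Since $\widehat{\phi_k}$ is supported in the annulus $\{|\xi|\sim 2^k\}$, the pieces are pairwise orthogonal in $L^2$ and $\dot H^{-1}$, yielding
\[ \|f_N\|_{L^2}^2+\|f_N\|_{\dot H^{-1}}^2 \;=\; \sum_{k=1}^N\bigl(2^{-2k}\|\phi\|_{L^2}^2 + 2^{-4k}\|\phi\|_{\dot H^{-1}}^2\bigr) \;=\; O(1). \]
Each $\phi_k$ behaves as a single Littlewood--Paley block, so by scaling $\|\phi_k\|_{B^{2/p}_{p,1}}\sim 2^{2k/p}\|\phi_k\|_{L^p}=\|\phi\|_{L^p}$, and summation yields $\|f_N\|_{B^{2/p}_{p,1}}\lesssim N$. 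The Riesz amplification is immediate from scale-invariance of $\mathcal R$: $(\mathcal R\phi_k)(0)=(\mathcal R\phi)(y_0)=c_0$, whence $(\mathcal R f_N)(0)=c_0 N$, which forces $\|\mathcal R f_N\|_{L^\infty}\geq c_0 N$.

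The main obstacle is the $L^\infty$ bound on $f_N$ itself, since all the dyadic pieces pile up near the origin. I would handle this by splitting the sum at the threshold scale $k_0(x)\sim\log_2(|y_0|/|x|)$. For $k\leq k_0$, the argument $2^kx+y_0$ lies close to $y_0$, so a first-order (or quadratic, if $\nabla\phi(y_0)=0$) Taylor expansion together with the vanishing of $\phi$ at $y_0$ gives $|\phi(2^kx+y_0)|\lesssim 2^k|x|$ (resp.\ $(2^k|x|)^2$); the geometric sum is then dominated by $2^{k_0}|x|\sim |y_0|$ (resp.\ $|y_0|^2$), uniformly in $N$ and $x$. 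For $k>k_0$ the Schwartz decay of $\phi$ provides a rapidly convergent tail of size $|y_0|^{-M}$. This cancellation-plus-decay argument is the essence of the lemma and the only point where the freedom in choosing $\phi$ must be carefully exploited.
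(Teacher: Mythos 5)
Your construction is correct, and it is worth noting that the paper does not actually prove this lemma: it simply cites \cite[Proposition 1.5]{WZ23}, so your proposal supplies a self-contained argument where the paper offers only a pointer. What you write is essentially the standard lacunary construction underlying that reference (and the data used in \cite{EM20}): a sum of $N$ annulus-band-limited bumps rescaled dyadically, almost orthogonal in $L^2$-based norms, each contributing $O(1)$ to the $B^{2/p}_{p,1}$ norm (whence the factor $N$), and with the Riesz images stacking up at one point by the dilation- and translation-invariance of the degree-zero symbol $\xi_1^2/|\xi|^2$. All the key points are in place: the $L^2$ and $\dot H^{-1}$ bounds follow from the scaling $\norm{\phi_k}_{L^2}=2^{-k}\norm{\phi}_{L^2}$, $\norm{\phi_k}_{\dot H^{-1}}=2^{-2k}\norm{\phi}_{\dot H^{-1}}$ (finite because $\widehat\phi$ vanishes near the origin), the Besov bound from the single-block scaling $\norm{\phi_k}_{B^{2/p}_{p,1}}\sim 2^{2k/p}\norm{\phi_k}_{L^p}\sim\norm{\phi}_{L^p}$, and the uniform $L^\infty$ bound from your Taylor-plus-Schwartz-decay splitting at $2^k|x|\sim|y_0|$, for which the single condition $\phi(y_0)=0$ already suffices (the extra vanishing $\nabla\phi(y_0)=0$ is unnecessary). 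Two minor points to tighten: the claimed equalities for $\norm{f_N}_{L^2}^2$ and $\norm{f_N}_{\dot H^{-1}}^2$ should be near-orthogonality estimates ($\sim$ rather than $=$) since adjacent dyadic annuli overlap; and the existence of $\phi$ with annular Fourier support, $\phi(y_0)=0$ and $(\mathcal R\phi)(y_0)\neq 0$ is most cleanly justified by observing that the two linear functionals $\phi\mapsto\phi(y_0)$ and $\phi\mapsto(\mathcal R\phi)(y_0)$ are linearly independent on the space of Schwartz functions band-limited to a fixed annulus (any linear relation would force the symbol of $a+b\mathcal R$ to vanish on the annulus), hence the joint evaluation map is surjective onto $\mathbb R^2$.
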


Now, we are in a position to prove Theorem \ref{main:thm}.
\begin{proof}[Proof of Theorem \ref{main:thm}] We supply the Euler--Maxwell system with the initial datum
\begin{equation*}
	(\omega_{0,N},E_{0,N}, B_{0,N})= \varepsilon (f_N, g\vec e_3, \vec e_1+ \nabla^\perp    g ), \quad \text{for all } N\in \mathbb N^*,
\end{equation*}
where $(f_N)_{N\in \mathbb N^*}$ is the family of functions introduced in Lemma \ref{lemma:data}, and $g \in C^\infty_c (\mathbb R^2)$ is any fixed smooth profile. Therefore, Theorem \ref{thm2-EM} gives rise to a unique local-in-time solution of \eqref{EM} defined on $[0,T^*_N]$, where we can set
\begin{equation*}
	T_N^* \bydef \frac{1}{C_* (2+c^2)\varepsilon N},
\end{equation*}
for some suitable contant $C>0$.
 Moreover, the vorticity enjoys the  bound 
\begin{equation}\label{bound:solution:main:proof}
	\norm {\omega}_{L^\infty_t B^1_{2,1}} \lesssim  \norm {\omega_{0,N}}_{B^1_{2,1}} \lesssim \varepsilon N ,
\end{equation}
for any $t\leq T_N^*$.
 Now,  we recast the equation of the vorticity  \begin{equation*}
	\partial_t \omega +u \cdot\nabla \omega =  \mathcal R \omega +  b  \cdot \nabla ( j_b- u_2)     +  \partial_{x_1} j_b
\end{equation*}
	as
	\begin{equation*}
 	(\omega \circ  \Phi ) (t) = e^{t \mathcal R} \omega_0  + \int_0^t e^{(t-\tau) \mathcal R}   [\mathcal R, \Phi ] \omega (\tau)  d\tau + \int_0^t e^{(t-\tau) \mathcal R}  (F\circ \Phi ) (\tau)   d\tau ,
 \end{equation*}
 where $\Phi $ denotes the flow associated with the divergence-free vector field $u$, and 
 \begin{equation*}
 	F\bydef  b  \cdot \nabla ( j_b- u_2)     +  \partial_{x_1} j_b.
 \end{equation*}
 Thus, it follows, for any $t\in [0,T^*_N]$, that 
 \begin{equation}\label{lower_bound}
 	\norm {\omega(t)}_{L^\infty} \geq \norm  {e^{t \mathcal R} \omega_0}_{L^\infty} - \norm { \int_0^t e^{(t-\tau) \mathcal R}   [\mathcal R, \Phi ] \omega (\tau)  d\tau}_{L^\infty} - \norm {\int_0^t e^{(t-\tau) \mathcal R}  (F\circ \Phi ) (\tau)   d\tau}_{L^\infty}.
 \end{equation}
 To estimate the first term, we follow \cite{EM20} by writing that   
 \begin{equation*}
 	e^{t \mathcal R} \omega_0 = \omega_0 + t\mathcal R\omega_0 + t^2 \left(  \sum_{n\geq 2} \frac{t^{n-2} \mathcal R^n}{n!} \right) \omega_0.
 \end{equation*}
 Thus, using that $\mathcal R$ is bounded on $B^1_{2,1}(\mathbb R^2)$, we infer that 
 \begin{equation*} 
 	\norm {e^{t \mathcal R} \omega_0 }_{L^\infty} \geq t \norm {\mathcal R \omega_0}_{L^\infty} - \norm {\omega_0}_{L^\infty} - C t^2 \norm {\omega_0}_{B^1_{2,1}}.
 \end{equation*}
 By incorporating the scaling of the initial data, it follows that 
 \begin{equation}\label{first_term}
 	\norm {e^{t \mathcal R} \omega_0 }_{L^\infty} \geq C\varepsilon t N  - \varepsilon - C t^2 \varepsilon N ,
 \end{equation}
 for some suitable constant $C>0$.
 For the second term, we employ the commutator estimate from Lemma \ref{lemma:commutator:Riesz} to find that   \begin{equation*}
 	\begin{aligned}
 		\norm { \int_0^t e^{(t-\tau) \mathcal R}   [\mathcal R, \Phi ] \omega (\tau)  d\tau}_{L^\infty} &\leq C t^2 \norm {\nabla u}_{L^\infty_{t,x}} e^{t \norm {\nabla u}_{L^\infty_{t,x}}}   \norm{\omega}_{L^\infty_t B^{1}_{2,1}}
 		\\
 		&\leq C t^2 \norm{\omega}_{L^\infty_t B^{1}_{2,1}}^2 e^{Ct \norm{\omega}_{L^\infty_t B^{1}_{2,1}}}.  
 	\end{aligned}
 \end{equation*}
 By further employing \eqref{bound:solution:main:proof}, we end up with the control 
 \begin{equation} \label{second_term}
 	\norm { \int_0^t e^{(t-\tau) \mathcal R}   [\mathcal R, \Phi ] \omega (\tau)  d\tau}_{L^\infty}  \leq C \left ( \varepsilon t N\right)^2 e^{C\varepsilon t N}.
 \end{equation}
 As for the third term, we have, due to Vishik's Lemma \ref{lemma:flow:Besov}, that 
 \begin{equation*}
 	\begin{aligned}
 		\norm {\int_0^t e^{(t-\tau) \mathcal R}  (F\circ \Phi ) (\tau)   d\tau}_{B^0_{\infty,1}}
 		& \lesssim \int_0^t \norm{ (F\circ \Phi )(\tau)}_{B^0_{\infty,1}} d\tau 
 		\\
 		&\lesssim   \int_0^t \left(1+  \log\left ( \norm{  \Phi(\tau )  }_{\dot W^{1,\infty}} \norm{  \Phi^{-1}(\tau )  }_{\dot W^{1,\infty}} \right) \right)  \norm{F (\tau)}_{B^0_{\infty,1}} d\tau .
 	\end{aligned} 
 \end{equation*}
 Next, using the bound
 \begin{equation*}
 	\norm {\Phi (\tau)}_{\dot W^{1,\infty}} \norm {\Phi^{-1} (\tau)}_{\dot W^{1,\infty}}  \leq e^{2\tau  \norm {\nabla u(\tau)}_{L^\infty}},
 \end{equation*}
 we find that 
 \begin{equation*}
 	\begin{aligned}
 		\norm {\int_0^t e^{(t-\tau) \mathcal R}  (F\circ \Phi ) (\tau)   d\tau}_{B^0_{\infty,1}}
 		& \lesssim    \int_0^t \left(1+ \tau \norm {\nabla u(\tau)}_{L^\infty} \right)    \norm{F (\tau)}_{B^0_{\infty,1}} d\tau 
 		\\
 		& \lesssim  \left(1+ t \norm {\omega}_{L^\infty_t B^1_{2,1}} \right)    \norm{F }_{L^1_tB^0_{\infty,1}}.
 	\end{aligned} 
 \end{equation*}
 Therefore, by applying \eqref{bound:solution:main:proof} and the bound from Proposition \ref{prop:source:term}, we deduce that 
 \begin{equation}\label{third_term}
 	\norm {\int_0^t e^{(t-\tau) \mathcal R}  (F\circ \Phi ) (\tau)   d\tau}_{B^0_{\infty,1}}
 		\lesssim \left(1+ \varepsilon tN \right)  \left( \varepsilon^2 N +1 \right)\left( \varepsilon +  c^2 \varepsilon t  N \right) t.
 \end{equation}
 
 All in all, inserting \eqref{first_term}, \eqref{second_term} and \eqref{third_term} into \eqref{lower_bound}, and choosing the parameters $(\varepsilon,N,t)$ in such away that
 \begin{equation*}
 	\varepsilon \ll 1, \quad t\ll 1, \qquad N\gg1,
 \end{equation*}
with 
\begin{equation*}
	\varepsilon t N  \in (0,1) \quad  (\text{very small but not vanishing}), 
\end{equation*}
	yields the desired lower bound 
	\begin{equation*}
		\norm {\omega (t)}_{L^\infty} \geq c_* >0,
	\end{equation*}
	for some $c_*$ that depends only the speed of light $c$. This completes the proof of the main theorem.
	
\end{proof}



\bibliographystyle{plain} 
\bibliography{plasma}

\begin{thebibliography}{10}

\bibitem{a19}
Diogo Ars\'enio.
\newblock Recent progress on the mathematical theory of plasmas.
\newblock {\em Bol. Soc. Port. Mat.}, 77:27--38, 2019.

\bibitem{ag20}
Diogo Ars\'{e}nio and Isabelle Gallagher.
\newblock Solutions of {N}avier-{S}tokes-{M}axwell systems in large energy
  spaces.
\newblock {\em Trans. Amer. Math. Soc.}, 373(6):3853--3884, 2020.

\bibitem{AHB24}
Diogo Ars\'{e}nio, Houamed Haroune, and Belkacem Said-Houari.
\newblock Global unique solutions to the planar inhomogeneous
  {N}avier--{S}tokes--{M}axwell equations.
\newblock {\em arXiv:2403.18500}, 2024.

\bibitem{ahh24}
Diogo Ars\'{e}nio, Zineb Hassainia, and Haroune Houamed.
\newblock Axisymmetric incompressible viscous plasmas: global well-posedness
  and asymptotics.
\newblock {\em Forum Math. Sigma}, 12:Paper No. e79, 57, 2024.

\bibitem{ah2}
Diogo Ars\'{e}nio and Haroune Houamed.
\newblock Stability analysis of two-dimensional ideal flows with applications
  to viscous fluids and plasmas.
\newblock {\em Int. Math. Res. Not. IMRN}, (8):7032--7059, 2024.

\bibitem{ah}
Diogo Ars\'enio and Haroune Houamed.
\newblock Damped {S}trichartz estimates and the incompressible
  {E}uler--{M}axwell system.
\newblock {\em Analysis \& PDE}, 2025.
\newblock in press.

\bibitem{aim15}
Diogo Ars\'{e}nio, Slim Ibrahim, and Nader Masmoudi.
\newblock A derivation of the magnetohydrodynamic system from
  {N}avier-{S}tokes-{M}axwell systems.
\newblock {\em Arch. Ration. Mech. Anal.}, 216(3):767--812, 2015.

\bibitem{as}
Diogo Ars\'{e}nio and Laure Saint-Raymond.
\newblock {\em From the {V}lasov-{M}axwell-{B}oltzmann system to incompressible
  viscous electro-magneto-hydrodynamics. {V}ol. 1}.
\newblock EMS Monographs in Mathematics. European Mathematical Society (EMS),
  Z\"{u}rich, 2019.

\bibitem{bcd11}
Hajer Bahouri, Jean-Yves Chemin, and Rapha\"{e}l Danchin.
\newblock {\em Fourier analysis and nonlinear partial differential equations},
  volume 343 of {\em Grundlehren der Mathematischen Wissenschaften [Fundamental
  Principles of Mathematical Sciences]}.
\newblock Springer, Heidelberg, 2011.

\bibitem{bis-book}
Dieter Biskamp.
\newblock {\em Nonlinear magnetohydrodynamics}, volume~1 of {\em Cambridge
  Monographs on Plasma Physics}.
\newblock Cambridge University Press, Cambridge, 1993.

\bibitem{D-book}
Peter~Alan Davidson.
\newblock {\em An introduction to magnetohydrodynamics}.
\newblock Cambridge Texts in Applied Mathematics. Cambridge University Press,
  Cambridge, 2001.

\bibitem{EM20}
Tarek~M. Elgindi and Nader Masmoudi.
\newblock {$L^\infty$} ill-posedness for a class of equations arising in
  hydrodynamics.
\newblock {\em Arch. Ration. Mech. Anal.}, 235(3):1979--2025, 2020.

\bibitem{gim}
Pierre Germain, Slim Ibrahim, and Nader Masmoudi.
\newblock Well-posedness of the {N}avier-{S}tokes-{M}axwell equations.
\newblock {\em Proc. Roy. Soc. Edinburgh Sect. A}, 144(1):71--86, 2014.

\bibitem{IK2011}
Slim Ibrahim and Sahbi Keraani.
\newblock Global small solutions for the {N}avier-{S}tokes-{M}axwell system.
\newblock {\em SIAM J. Math. Anal.}, 43(5):2275--2295, 2011.

\bibitem{JS22}
In-Jee Jeong and Sung-Jin Oh.
\newblock On the {C}auchy problem for the {H}all and electron
  magnetohydrodynamic equations without resistivity {I}: {I}llposedness near
  degenerate stationary solutions.
\newblock {\em Ann. PDE}, 8(2):Paper No. 15, 106, 2022.

\bibitem{JS24}
In-Jee Jeong and Sung-Jin Oh.
\newblock On illposedness of the {H}all and electron magnetohydrodynamic
  equations without resistivity on the whole space.
\newblock {\em arXiv:2404.13790}, 2024.

\bibitem{JKK25}
Dongha Kim, Jihoon Lee, and Junha Kim.
\newblock Singular limit problem from {E}uler--{M}axwell to inviscid
  {M}agnetohydrodynamics.
\newblock In preparation.

\bibitem{EK25}
Tarek M.~Elgindi and Karim R.~Shikh~Khalil.
\newblock Strong ill-posedness in {$L^\infty$} for the {R}iesz transform
  problem.
\newblock {\em Analysis \& PDE}, 2025.
\newblock in press.

\bibitem{MN}
Nader Masmoudi.
\newblock Global well posedness for the {M}axwell--{N}avier--{S}tokes system in
  2{D}.
\newblock {\em J. Math. Pures et Appliqu\'ees}, 93:559--571, 2010.

\bibitem{RW24}
Faiq Raees and Zhao Weiren.
\newblock On the hydrostatic approximation of {N}avier-{S}tokes-{M}axwell
  system with 2{D} electronic fields.
\newblock {\em arXiv:2411.06527}, 2024.

\bibitem{V98}
Misha Vishik.
\newblock Hydrodynamics in {B}esov spaces.
\newblock {\em Arch. Ration. Mech. Anal.}, 145(3):197--214, 1998.

\bibitem{FW22}
Fan Wu.
\newblock On the energy equality for distributional solutions to
  {N}avier-{S}tokes-{M}axwell system.
\newblock {\em J. Math. Fluid Mech.}, 24(4):Paper No. 111, 9, 2022.

\bibitem{WZ23}
Jiahong Wu and Jiefeng Zhao.
\newblock Mild ill-posedness in {$L^\infty$} for 2{D} resistive {MHD} equations
  near a background magnetic field.
\newblock {\em Int. Math. Res. Not. IMRN}, (6):4839--4868, 2023.

\bibitem{Y63}
V.~I. Yudovi\v{c}.
\newblock Non-stationary flows of an ideal incompressible fluid.
\newblock {\em \v{Z}. Vy\v{c}isl. Mat i Mat. Fiz.}, 3:1032--1066, 1963.

\end{thebibliography}

\end{document}